\newtheorem{theorem}{Theorem}[section]
\newtheorem{definition}[theorem]{Definition}
\newtheorem{lemma}[theorem]{Lemma}
\newtheorem{cor}[theorem]{Corollary}
\newtheorem{notation}[theorem]{Notation}
\newtheorem{example}[theorem]{Example}
\newtheorem{remark}[theorem]{Remark}
\title{\Large\bf The endomorphism semiring of a commutative inverse semigroup}
\author {\textbf{M. K. Sen, S. K. Maity \& Sumanta Das} \\
\small\it Department of Pure Mathematics, University  of  Calcutta \\
\small\it 35, Ballygunge Circular Road, Kolkata-700019, India.\\
\small e-mail: senmk6@gmail.com \, \,  \, skmpm@caluniv.ac.in \, \,  \,  sumanta.das498@gmail.com}
\date{}
\begin{document}

\maketitle

\begin{abstract}
The authors \cite{jez} proved that the endomorphism semiring of a nontrivial semilattice is always subdirectly irreducible and described its monolith. Here we prove that the endomorphism semiring of a commutative inverse semigroup with at least two idempotents is always subdirectly irreducible and describe its monolith. 
\end{abstract}

\vspace{.5em}
AMS Mathematics Subject Classification (2010): 20M14, 20M18, 08B26, 16Y60.

{\bf Key Words:} Inverse semigroup, semilattice, endomorphism, subdirectly irreducible, monolith.

\section{Introduction}
A groupoid $(S,\cdot)$ is a semigroup if the binary operation `$\cdot$' is associative, i.e., $(a\cdot b)\cdot c = a\cdot (b \cdot c)$ for all $a, b, c \in S$. A semigroup $S$ is said to be an inverse semigroup if for each element $s \in S$, there exists a unique element $s' \in S$ such that $ss's = s$ and $s'ss' = s'$. The set of all idempotent elements in a semigroup $S$ is denoted by $E(S)$. 

A semiring $(S, +, \cdot)$ is an algebra with two binary operations `$+$' and `$\cdot$' such that the semigroup reducts $(S, +)$ and $(S, \cdot)$ are connected by distributive laws, viz., $a\cdot(b + c) = a\cdot b + a\cdot c$ and $(a+b)\cdot c = a\cdot c+b\cdot c$, for all $a, b, c \in S$. A semiring $S$ is called an additive inverse semiring if for each $a \in S$, there exists a unique element $a' \in S$ such that $a + a' + a = a$ and $a' + a + a' = a'$. Additive inverse semirings were first studied by Karvellas \cite{Karvellas} in 1974 and he proved that for any $a,b \in S$, $(a\cdot b)' = a'\cdot b = a\cdot b'$, $(a')' = a$ and $(a+ b)' = a' + b'$.  The set of all additive idempotents in a semiring $S$ is denoted by $E^+(S)$. 

An algebra $A$ is said to be subdirectly irreducible if it is nontrivial (i.e., has at least two elements) and intersection of arbitrary collection of non-identity congruences on it is again a non-identity congruence, i.e., it is nontrivial and among its non-identity congruences there exists the least one; this least non-identity congruence on $A$ is called the monolith of $A$. An algebra is called simple if it is nontrivial and has only two congruences (the identity congruence and the universal congruence). Through out this paper, we always denote the identity congruence on a semiring $S$ by ${\epsilon}_{_S}$ or by simply by 
$\epsilon$ when no confusion arises. 

The authors \cite{jez} proved that the endomorphism semiring of a nontrivial semilattice is always subdirectly irreducible and described the monolith of the endomorphism semiring of a nontrivial semilattice. They also proved that the endomorphism semiring is congruence simple if and only if the semilattice has both a least and a largest element.

For all undefined terms and definitions in semigroup theory we refer to \cite{howie} and \cite{Petrich}. 

\section{The Monolith}	

Let $(G, +)$ be a commutative inverse semigroup. Then $End(G)$, the set of all endomorphisms of $G$ forms an additive commutative as well as an additive inverse semiring, where addition is pointwise addition of two mappings and multiplication is composition of two mappings. For two elements $a, b\in G$, we define 
$a \, {\leq}_q \, b$ if and only if $a+a'+b=b$. It is easy to verify that the relation `${\leq}_q$' is a quasi order relation on $(G, +)$. In this connection, it is interesting to point out that for a commutative inverse semigroup $G$, the quasi order relation `${\leq}_q$' becomes a partial order relation on $E(G)$. For each element $a\in G$, we define $\lambda_a : G \longrightarrow G$ by $\lambda_a(x) = a+a'$, for all $x \in G$. Then it is easy to verify that for all $a \in G$, $\lambda_a \in End(G)$ satisfying the properties $\lambda_a+\lambda_b = \lambda_{a+b}, \, f \cdot \lambda_a = \lambda_{_{f(a)}}$ and $\lambda_a \cdot f = \lambda_a$ for all $f \in End(G)$. Thus $\textbf{L}_G = \{\lambda_a : a \in G \}$ is an ideal of $End(G)$. In this connection, it is worth mentioning that an element $f \in End(G)$ is a constant mapping if and only if $f = \lambda_a$ for some $a \in G$. 

%[Note that for a constant mapping $f$, we have $f(x) = c$ for all $x \in G$. Then $f(x') = c$, i.e., $(f(x))' = f(x') = c$ and $f(x'+x) = c$. Also, $f(x'+x) = (f(x'+x))' = c'$. Now $f(x) = f(x+x'+x) = f(x)+f(x'+x) = c+c'$ implies $f = \lambda_c$.] 

For every triple $a, b, c$ of elements of $G$ with $a \, {\leq}_q \, b$, we define $\mu_{a,b,c} : G \longrightarrow G$ by : for all $x \in G$, \vspace{-.5em}
$$\mu_{a,b,c}(x)  = \left\{\begin{array}{l} a+a', \hspace{1.4em}
\mbox{ if}  \hspace{.4em}  x \, {\leq}_q \, c \\
b+b', \hspace{1.4em} \, \, \, \mbox{if} \hspace{.4em}   x \, {\nleq}_q \, c.
\end{array}\right.$$
We now show that $\mu_{a,b,c} \in End(G)$. For this, let $x, y$ be any two elements of $G$. Since $a \, {\leq}_q \, b$, we must have $a+a'+b=b$. 
 
\noindent $Case - 1:$ If $x+y \, {\leq}_q \, c$, then $x \, {\leq}_q \, x+y \, {\leq}_q \, c$ as well as $y \, {\leq}_q \, x+y \, {\leq}_q \, c$. Hence $\mu_{a,b,c}(x)+\mu_{a,b,c}(y) = a+a'+a+a'= a+a'= \mu_{a,b,c}(x+y)$.
 
\noindent $Case - 2:$ If $x+y \, {\nleq}_q \, c$, then at least one of $x \, {\nleq}_q \, c$ and $y \, {\nleq}_q \, c$ must hold. 

$Subcase - (A):$ Suppose $x \, {\leq}_q \, c$ and $y \, {\nleq}_q \, c$. Then $\mu_{a,b,c}(x)+\mu_{a,b,c}(y) = a+a'+b+b'=(a+a'+b)+b'=b+b' = \mu_{a,b,c}(x+y)$.
   	
$Subcase - (B):$ Suppose $x \, {\nleq}_q \, c$ and $y \,{\leq}_q \, c$. Then $\mu_{a,b,c}(x)+\mu_{a,b,c}(y) = b+b'+a+a' = b+b' = \mu_{a,b,c}(x+y)$.

$Subcase - (C):$ Suppose $x \, {\nleq}_q \, c$ and $y \, {\nleq}_q \, c$. Then $\mu_{a,b,c}(x)+\mu_{a,b,c}(y) = b+b'+b+b' = b+b'=\mu_{a,b,c}(x+y)$.
   
\noindent Considering all the cases, we have $\mu_{a,b,c} \in End(G)$. The subsemiring of $End(G)$ generated by $\mu_{a,b,c} \,$ (where $a, b, c \in G$ with $a \, {\leq}_q \, b$) is denoted by $\textbf{M}_G$. In this connection, we point out that $\textbf{M}_G$ is a left ideal of $End(G)$ and is an ideal of $End(G)$ if $E(G)$ is finite. 

\begin{notation}
Let $(G, +)$ be a commutative inverse semigroup. Then $G$ is a Clifford semigroup and hence $G$ is a semilattice of groups. For any $a \in G$, we denote the identity element of the subgroup of $G$ containing 
the element $a$ by $a^0$. Since $G$ is commutative, it follows easily that $a^0 = a+a' = a'+a$.
\end{notation}

\begin{remark} \label{rem:monolith}
Let $(G, +)$ be a commutative inverse semigroup. Then $E(G)$ is a semilattice and for any two elements $a, b \in G$, we have $a \, {\leq}_q \, b$ in $G$ if and only if $a^0 \leq b^0$ in $E(G)$, where $`\leq$' is the partial order on a semilattice. Again, for every triple $a, b, c$ of elements of $G$ with $a \, {\leq}_q \, b$, though the endomorphism $\mu_{a,b,c}$ on $G$ is different from the endomorphism ${\mu_{a,b,c}|}_{_{E(G)}}$ on the subsemilattice $E(G)$, but $\mu_{a,b,c}(x) = \mu_{a,b,c}(x^0) = \mu_{a^0,b^0,c^0}(x) = \mu_{a^0,b^0,c^0}(x^0)$ for all $x \in G$. Therefore, composition two $\mu$'s on $G$ is same as composition of corresponding two $\mu$'s on the semilattice $E(G)$. Thus from the proof of \cite[Theorem 3.4]{jez}, it follows that composition of two $\mu$'s is again a $\mu$. 
\end{remark}

\begin{definition}
A nonempty subset $A$ of $End(G)$ with at least two elements is said to be separated by idempotents if for any two $f, g \in A$ with $f \neq g$, there exists an element $e \in E(G)$ such that $f(e) \neq g(e)$.
\end{definition}

\begin{example}
We consider the Clifford semigroup $S_1 = (\mathbb{Z}_{3} , \cdot)$ and the semilattice $S_2 = (\left\{ 0 , 1 \right\},\cdot)$, where $\mathbb{Z}_{3}$ is the set of all residue classes of integers modulo $3$. Let $G$ be the direct product of $S_1$ and $S_2$. Then $G$ is a commutative inverse semigroup with $E(G) = \left\{(\overline{0} , 0), (\overline{0} , 1), (\overline{1} , 0), (\overline{1} , 1)\right\}$.
	
Then all the nine endomorphisms of $G$ are given by :
	
	(i) $f_1:G \rightarrow G$ defined by $f_1(x,y) = (\overline{0} , 0)$, for all $(x,y) \in G$.
	
	(ii) $f_2:G \rightarrow G$ defined by $f_2(x,y) = (\overline{0} , 1)$, for all $(x,y) \in G$.
	
	(iii) $f_3:G \rightarrow G$ defined by $f_3(x,y) = (\overline{1} , 0)$, for all $(x,y) \in G$.
	
	(iv) $f_4:G \rightarrow G$ defined by $f_4(x,y) = (\overline{1} , 1)$, for all $(x,y) \in G$.	
	
	(v) $f_5:G \rightarrow G$ defined by $f_5(x,y) = (\overline{0} , y)$, for all $(x,y) \in G$.	
	
	(vi) $f_6:G \rightarrow G$ defined by $f_6(x,y) = (\overline{1} , y)$, for all $(x,y) \in G$.	
	
	(vii) $f_7:G \rightarrow G$ defined by $f_7(x,y) = (x , 0)$, for all $(x,y) \in G$.
	
	(viii) $f_8:G \rightarrow G$ defined by $f_8(x,y) = (x , 1)$, for all $(x,y) \in G$.
	
	(ix) $f_9:G \rightarrow G$ defined by $f_9(x,y) = (x , y)$, for all $(x,y) \in G$.
	
\noindent It can be easily checked that any subset $A \subseteq End(G)$ with $|A|\geq 2$ is separated by idempotents.
\end{example}

\begin{remark} 
Let $G$ be a commutative inverse semigroup. Then $\mu_{a,a,c} = \lambda_a$ for any $c\in G$ and thus $\textbf{L}_G \subseteq \textbf{M}_G$.
\end{remark}

\begin{definition} \label{def:con1}
Let $(G, +)$ be a commutative inverse semigroup. We define a relation $\mathscr{R}_I$ on $End(G)$ by : for $f, g \in End(G)$,
\begin{center}
$f \, \, \mathscr{R}_I \, \, g$ if and only if $f+\lambda_a = g+\lambda_a$ for some $a \in G$.
\end{center}
\end{definition}

\begin{theorem}
Let $(G, +)$ be a commutative inverse semigroup. Then the relation $\mathscr{R}_I$, defined in Definition \ref{def:con1}, is a congruence on $End(G)$.	
\end{theorem}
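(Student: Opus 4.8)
The plan is to verify directly that $\mathscr{R}_I$ meets the three requirements of a semiring congruence: it is an equivalence relation, it is compatible with the addition, and it is compatible with the multiplication. Throughout I would lean on the three identities already recorded for the maps $\lambda_a$, namely $\lambda_a + \lambda_b = \lambda_{a+b}$, $f \cdot \lambda_a = \lambda_{f(a)}$ and $\lambda_a \cdot f = \lambda_a$, together with the fact that $(End(G), +)$ is commutative.

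First, for the equivalence-relation part, reflexivity is immediate (for any $a \in G$ one has $f + \lambda_a = f + \lambda_a$) and symmetry is built into the definition. For transitivity, suppose $f + \lambda_a = g + \lambda_a$ and $g + \lambda_b = h + \lambda_b$. I would add $\lambda_b$ to both sides of the first equation and $\lambda_a$ to both sides of the second; using $\lambda_a + \lambda_b = \lambda_{a+b}$ and the commutativity and associativity of $+$, both relations collapse to equations witnessed by the single element $a+b$, namely $f + \lambda_{a+b} = g + \lambda_{a+b}$ and $g + \lambda_{a+b} = h + \lambda_{a+b}$. Chaining these gives $f + \lambda_{a+b} = h + \lambda_{a+b}$, so $f \, \mathscr{R}_I \, h$ with witness $a+b$. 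This step of producing a common witness is the only place where any real idea is needed, and it is precisely where the identity $\lambda_a + \lambda_b = \lambda_{a+b}$ does the work.

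Next, for compatibility with addition, if $f + \lambda_a = g + \lambda_a$, then for any $h \in End(G)$ I add $h$ to both sides and use commutativity and associativity of $+$ to obtain $(f+h) + \lambda_a = (g+h) + \lambda_a$, so $f + h \, \mathscr{R}_I \, g + h$ with the same witness $a$.

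Finally, for compatibility with multiplication I treat each side separately. For left multiplication by $h$, I apply the left distributive law to $f + \lambda_a = g + \lambda_a$ and use $h \cdot \lambda_a = \lambda_{h(a)}$, obtaining $h \cdot f + \lambda_{h(a)} = h \cdot g + \lambda_{h(a)}$, so $h \cdot f \, \mathscr{R}_I \, h \cdot g$ with witness $h(a)$. For right multiplication by $h$, I apply the right distributive law and use $\lambda_a \cdot h = \lambda_a$, obtaining $f \cdot h + \lambda_a = g \cdot h + \lambda_a$, so $f \cdot h \, \mathscr{R}_I \, g \cdot h$ with witness $a$. Combining these compatibility checks with the equivalence-relation property establishes that $\mathscr{R}_I$ is a congruence on $End(G)$. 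I expect no genuine obstacle here; once the $\lambda$-identities are in hand the whole argument is essentially bookkeeping, with transitivity the only step demanding a moment's thought.
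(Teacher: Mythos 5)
Your proposal is correct and follows essentially the same route as the paper: the same common-witness trick $f+\lambda_{a+b}=g+\lambda_{a+b}=h+\lambda_{a+b}$ for transitivity, and the same $\lambda$-identities ($h\cdot\lambda_a=\lambda_{h(a)}$ for left multiplication, $\lambda_a\cdot h=\lambda_a$ for right multiplication) together with distributivity for multiplicative compatibility. The only difference is that you spell out the right-multiplication case that the paper dismisses with ``similarly,'' which is a harmless elaboration.
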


\begin{proof}
Clearly, the relation $\mathscr{R}_I$ is reflexive and symmetric.
	
For transitivity, let $f, g, h \in End(G)$ such that $f \, \, \mathscr{R}_I \, \, g$ and $g \, \, \mathscr{R}_I \, \, h$. Then there exist elements $a, b \in G$ such that $f+\lambda_a = g+\lambda_a$ and $g+\lambda_b = h+\lambda_b$. Now, $f+\lambda_{a+b} = f+\lambda_a+\lambda_b = g+\lambda_a+\lambda_b = g+\lambda_b+\lambda_a = h+\lambda_b+\lambda_a = h+\lambda_a+\lambda_b =  h+\lambda_{a+b}$ and thus $\mathscr{R}_I$ is transitive. 

Clearly, $\mathscr{R}_I$ is a congruence on $(End(G), +)$. To show $\mathscr{R}_I$ is a congruence on $(End(G), \cdot)$, let $f, g, h \in End(G)$ such that $f \, \, \mathscr{R}_I \, \, g$. Then there exists an element $a \in G$ such that $f+\lambda_a = g+\lambda_a$. Now, $h \cdot f +\lambda_{_{h(a)}} = h \cdot f + h \cdot \lambda_a = h\cdot (f+\lambda_a) = h \cdot (g+\lambda_a) = h \cdot g + h \cdot \lambda_a = h \cdot g +\lambda_{_{h(a)}}$ implies $(h\cdot f) \,  \, \mathscr{R}_I \, \, (h\cdot g)$. Similarly, $(f\cdot h) \,  \, \mathscr{R}_I \, \, (g\cdot h)$. Hence $\mathscr{R}_I$ is a congruence on $(End(G), \cdot)$ and consequently, $\mathscr{R}_I$ is a congruence on the semiring $End(G)$. 
\end{proof}

\begin{remark}
Let $(G, +)$ be a commutative inverse semigroup containing at least two idempotents. Then $\mathscr{R}_I \neq {\epsilon}$ on $End(G)$.
\end{remark}

\begin{proof}
Since $E(G)$ contains at least two idempotent elements, we must have $e_1, e_2 \in E(G)$ such that $e_1 \neq e_2$. Then, clearly $\lambda_{e_1} \neq \lambda_{e_2}$. Now, $\lambda_{e_1}(x)+\lambda_{e_1+e_2}(x) = e_1+e_1'+(e_1+e_2)+(e_1+e_2)' = e_1+e_2 = \lambda_{e_2}(x)+\lambda_{e_1+e_2}(x)$, for all $x\in G$ implies $\lambda_{e_1}+\lambda_{e_1+e_2} = \lambda_{e_2}+\lambda_{e_1+e_2}$. Hence $(\lambda_{e_1}, \lambda_{e_2})\in \mathscr{R}_I$ and thus $\mathscr{R}_I \neq {\epsilon}$ on $End(G)$.
\end{proof}

\begin{definition} \label{def:con2}
Let $(G, +)$ be a commutative inverse semigroup. We define a relation $\mathscr{R}_L$ on $End(G)$ by : for $f, g \in End(G)$, $f \, \, \mathscr{R}_L \, \, g$ if and only if the following two conditions are satisfied : 

(i) the range of $f$ is a lower bounded subset of $G$, i.e., there exists $a\in G$ such that $a \, {\leq}_q \, \, f(x)$, for all $x\in G$,
		
(ii) the range of $g$ is a lower bounded subset of $G$, i.e., there exists $b\in G$ such that $b \, {\leq}_q \, \, g(x)$, for all $x\in G$.
\end{definition}

\begin{theorem}
Let $(G, +)$ be a commutative inverse semigroup. Then the relation $\mathscr{R}_L$, defined in Definition \ref{def:con2}, is a congruence on $End(G)$.	
\end{theorem}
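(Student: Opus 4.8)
The plan is to recognize $\mathscr{R}_L$ as the relation that lumps together all endomorphisms with lower-bounded range into a single block while leaving every other endomorphism fixed. Writing $B = \{\, h \in End(G) : \text{there is } a \in G \text{ with } a \, {\leq}_q \, h(x) \text{ for all } x \in G \,\}$ for the set of endomorphisms with lower-bounded range, the two defining conditions say exactly that $f \in B$ and $g \in B$, so $\mathscr{R}_L$ relates $f$ and $g$ precisely when both lie in $B$ (the diagonal being adjoined to guarantee reflexivity). Symmetry and transitivity are then immediate, since the conjunction of the two symmetric conditions ``$f \in B$'' and ``$g \in B$'' is stable under swapping and chaining. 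The whole content of the theorem therefore lies in compatibility with $+$ and $\cdot$, and I would reduce this to one structural fact: $B$ is an ideal that \emph{absorbs} both operations, in the sense that $h \in B$ forces $f + h$, $f \cdot h$ and $h \cdot f$ into $B$ for every $f \in End(G)$.

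Second, I would prove this absorption lemma. Fix $h \in B$ with lower bound $a$, so $a \, {\leq}_q \, h(x)$ for all $x$. For addition, note that $h(x) \, {\leq}_q \, f(x) + h(x)$ always holds (since $w \, {\leq}_q \, w + u$ reduces to $w^0 + (w+u) = w+u$), so transitivity of $\leq_q$ gives $a \, {\leq}_q \, (f+h)(x)$ and hence $f + h \in B$. For the product $h \cdot f$, the range of $h \cdot f$ is contained in the range of $h$, so the same lower bound $a$ works and $h \cdot f \in B$. For the product $f \cdot h$, I would first record that every endomorphism preserves $\leq_q$: from $a \, {\leq}_q \, h(x)$, i.e. $a^0 + h(x) = h(x)$, applying $f$ and using $f(a^0) = f(a)^0$ gives $f(a)^0 + f(h(x)) = f(h(x))$, that is $f(a) \, {\leq}_q \, (f \cdot h)(x)$; hence $f(a)$ is a lower bound for the range of $f \cdot h$ and $f \cdot h \in B$.

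Finally, compatibility follows by a short case analysis. Given $(f,g), (h,k) \in \mathscr{R}_L$, if both pairs lie on the diagonal then $f+h = g+k$ and $f \cdot h = g \cdot k$, so these pairs belong to $\epsilon \subseteq \mathscr{R}_L$; otherwise at least one of the two pairs is non-diagonal and therefore has both of its members in $B$, so each of $f+h$ and $g+k$ has a summand in $B$ and each of $f \cdot h$ and $g \cdot k$ has a factor in $B$, whence the absorption lemma places all of them in $B$ and both resulting pairs in $B \times B \subseteq \mathscr{R}_L$. I expect the absorption for $f \cdot h$ to be the main obstacle, as it is the only step requiring the monotonicity of endomorphisms for $\leq_q$ together with the identity $f(a^0) = f(a)^0$, whereas the additive and the $h \cdot f$ cases are immediate from transitivity and range inclusion. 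The one point to state carefully is reflexivity: an endomorphism with unbounded range is $\mathscr{R}_L$-related only to itself, so the diagonal must be understood as part of $\mathscr{R}_L$ for it to be an equivalence relation at all.
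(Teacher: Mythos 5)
Your proof is correct, and its computational core coincides with the paper's: the paper verifies, for $(f,g)\in\mathscr{R}_L$ and arbitrary $h$, exactly your three absorption facts --- $a+a'+f(x)=f(x)$ gives $a+a'+(f+h)(x)=(f+h)(x)$ for addition, substitution $x\mapsto h(x)$ handles $f\cdot h$ (range inclusion, in the paper's convention $(f\cdot h)(x)=f(h(x))$), and applying $h$ to $a+a'+f(x)=f(x)$ gives $h(a)\,{\leq}_q\,(h\cdot f)(x)$ for the other side, using $h(a')=\bigl(h(a)\bigr)'$ just as you use $f(a^0)=f(a)^0$. What you do differently is purely organizational --- isolating the set $B$ of lower-bounded-range endomorphisms, proving a two-sided absorption lemma for it, and then checking compatibility in the two-pair form via a diagonal/non-diagonal case split --- but this packaging buys one genuine improvement: you are right that, as literally written, Definition \ref{def:con2} gives $\mathscr{R}_L = B\times B$, which fails reflexivity whenever some endomorphism has non-lower-bounded range (for instance $\textbf{id}_{E(G)}$ when $E(G)$ has no least element, a situation the paper itself exploits in the simplicity section). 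The paper elides this with ``Clearly, $\mathscr{R}_L$ is an equivalence relation,'' whereas your explicit adjoining of the diagonal, $(B\times B)\cup\epsilon$, is the reading needed for the theorem (and for the later monolith argument, which treats non-diagonal pairs separately) to be correct as stated.
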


\begin{proof}
Clearly, $\mathscr{R}_L$ is an equivalence relation on $End(G)$.
		
To show $\mathscr{R}_L$ is a congruence on $(End(G), +)$, let $(f , g)\in \mathscr{R}_L$ and $h \in End(G)$. Since $(f, g) \in \mathscr{R}_L$, there exist elements $a, b\in G$ such that $a \, {\leq}_q \, \, f(x)$ and $b\, {\leq}_q \, \, g(x)$, for all $x \in G$. Now $a \, {\leq}_q \, \, f(x)$ implies $a+a'+f(x)=f(x)$, i.e., $a+a'+f(x)+h(x) = f(x)+h(x)$, i.e., $a+a'+(f+h)(x) = (f+h)(x)$ for all $x \in G$. Hence $a \, {\leq}_q \, \, (f+h)(x)$ for all $x \in G$. Similarly, we can show that $b \, {\leq}_q \, \, (g+h)(x)$ for all $x\in G$. Therefore $(f+h, g+h)\in \mathscr{R}_L$ and hence $\mathscr{R}_L$ is a congruence on $(End(G), +)$.
		
Finally, to show $\mathscr{R}_L$ is a congruence on $(End(G), \cdot)$, let $(f , g)\in \mathscr{R}_L$ and $h \in End(G)$. Since $(f, g) \in \mathscr{R}_L$, there exist elements $a, b\in G$ such that $a \, {\leq}_q \, \, f(x)$ and $b \, {\leq}_q \, \, g(x)$, for all $x \in G$. Then $a \, {\leq}_q \, \, f(x)$ for all $x \in G$ implies $a \, {\leq}_q \, \, f(h(x))$ for all $x \in G$, i.e., $a \, {\leq}_q \, \, (f\cdot h)(x)$ for all $x \in G$. Similarly, it follows that $b \, {\leq}_q \, \, (g\cdot h)(x)$ for all $x\in G$. Therefore $(f\cdot h, g\cdot h)\in \mathscr{R}_L$ and hence $\mathscr{R}_L$ is a right congruence on $(End(G), \cdot)$. Again, since $a \, {\leq}_q \, \, f(x)$, for all $x \in G$, we have $a+a'+f(x) = f(x)$, for all $x \in G$. This implies $h(a+a'+f(x)) = h(f(x))$, for all $x \in G$, i.e., $h(a)+(h(a))'+(h\cdot f)(x) = (h\cdot f)(x)$, for all $x \in G$, i.e., $h(a) \, \, {\leq}_q \, \, (h\cdot f)(x)$ for all $x \in G$. Similarly, from $b \, {\leq}_q \, \, g(x)$, for all $x \in G$, we can show that $h(b) \, \, {\leq}_q \, \, (h\cdot g)(x)$, for all $x \in G$. Therefore $(h\cdot f, h\cdot g)\in \mathscr{R}_L$ and hence $\mathscr{R}_L$ is a left congruence on $(End(G), \cdot)$. Thus, $\mathscr{R}_L$ is a congruence on $(End(G), \cdot)$ and consequently, $\mathscr{R}_L$ is a congruence on the semiring $End(G)$. 
\end{proof}

\begin{remark}
Let $(G, +)$ be a commutative inverse semigroup containing at least two idempotent elements. Then $\mathscr{R}_L \neq {\epsilon}$ on $End(G)$.
\end{remark}

\begin{proof}
Since $E(G)$ contains at least two idempotent elements, we must have $e_1, e_2 \in E(G)$ such that $e_1 \neq e_2$. Clearly, $\lambda_{e_1} \neq \lambda_{e_2}$. Now $e_1+e_1'+\lambda_{e_1}(x) = \lambda_{e_1}(x)$ and $e_2+e_2'+\lambda_{e_2}(x) = \lambda_{e_2}(x)$, for all $x\in G$, and hence $e_1\leq \lambda_{e_1}(x)$, $e_2\leq \lambda_{e_2}(x)$, for all $x\in G$. Therefore $(\lambda_{e_1} , \lambda_{e_2})\in \mathscr{R}_L$ and  thus $\mathscr{R}_L \neq {\epsilon}$ on $End(G)$.
\end{proof}

\begin{theorem} \label{th:monolith}
Let $(G, +)$ be a commutative inverse semigroup containing at least two idempotents and $E$ be a subsemiring of $End(G)$ such that $\textbf{M}_G \subseteq E$ and $E$ is separated by idempotents. Then $E$ is subdirectly irreducible and its monolith is ${\mathscr{R}|}_{_E}$, where $\mathscr{R} = \mathscr{R}_I \cap \mathscr{R}_L$ and ${\mathscr{R}|}_{_E} = \mathscr{R} \cap (E\times E)$. 
\end{theorem}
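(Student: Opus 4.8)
The plan is to establish the two defining properties of a monolith: that $\mathscr{R}|_{_E}$ is a non-identity congruence on $E$, and that it is contained in every non-identity congruence $\rho$ of $E$. The first is immediate. Restricting the congruence $\mathscr{R} = \mathscr{R}_I \cap \mathscr{R}_L$ on $End(G)$ to the subsemiring $E$ gives a congruence $\mathscr{R}|_{_E}$ on $E$, and since $G$ has distinct idempotents $e_1 \neq e_2$, the two preceding remarks show $(\lambda_{e_1}, \lambda_{e_2}) \in \mathscr{R}_I$ and $(\lambda_{e_1}, \lambda_{e_2}) \in \mathscr{R}_L$, hence $(\lambda_{e_1}, \lambda_{e_2}) \in \mathscr{R}$; as $\lambda_{e_1}, \lambda_{e_2} \in \textbf{L}_G \subseteq \textbf{M}_G \subseteq E$ are distinct, we get $\mathscr{R}|_{_E} \neq \epsilon$. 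The entire weight of the theorem therefore lies in showing $\mathscr{R}|_{_E} \subseteq \rho$ for an arbitrary non-identity congruence $\rho$ on $E$.

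First I would reduce to a pair of constant maps. Choose $(f,g) \in \rho$ with $f \neq g$; since $E$ is separated by idempotents there is $e \in E(G)$ with $f(e) \neq g(e)$, and both $f(e)$ and $g(e)$ are idempotents (images of an idempotent under an endomorphism). Right-multiplying the pair $(f,g)$ by $\lambda_e \in \textbf{L}_G \subseteq E$ and using the identity $h \cdot \lambda_a = \lambda_{h(a)}$, I obtain $(\lambda_{f(e)}, \lambda_{g(e)}) \in \rho$, a pair of \emph{distinct} constant maps. Writing $u = f(e)$, $v = g(e)$ and adding $\lambda_u$ (resp. $\lambda_v$), at least one of $(\lambda_u, \lambda_{u+v})$, $(\lambda_v, \lambda_{u+v})$ has distinct entries, giving a pair $(\lambda_{c_1}, \lambda_{c_2}) \in \rho$ of strictly comparable idempotent constants, $c_1 < c_2$ in the semilattice $E(G)$.

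The main obstacle is then to show that this single pair collapses all of $\textbf{L}_G$, i.e. $\textbf{L}_G \times \textbf{L}_G \subseteq \rho$, and this is exactly where the hypotheses $\textbf{M}_G \subseteq E$ and separation by idempotents are indispensable. For arbitrary idempotents $g_1 \, {\leq}_q \, g_2$, I would left-multiply $(\lambda_{c_1}, \lambda_{c_2})$ by $\mu_{g_1, g_2, c_1} \in \textbf{M}_G \subseteq E$; using $h \cdot \lambda_a = \lambda_{h(a)}$ together with $\mu_{g_1, g_2, c_1}(c_1) = g_1$ (since $c_1 \, {\leq}_q \, c_1$) and $\mu_{g_1, g_2, c_1}(c_2) = g_2$ (since $c_2 \, {\nleq}_q \, c_1$), this yields $(\lambda_{g_1}, \lambda_{g_2}) \in \rho$. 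The delicate point is the choice of the third parameter $c = c_1$ in $\mu_{g_1,g_2,c_1}$, which is what forces the two constant values to come out as the prescribed $g_1, g_2$. Since any two idempotents lie below their join, symmetry and transitivity then give $(\lambda_a, \lambda_b) \in \rho$ for all $a, b \in G$, that is, $\textbf{L}_G \times \textbf{L}_G \subseteq \rho$.

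Finally I would lift this to an arbitrary pair $(p,q) \in \mathscr{R}|_{_E}$. By definition of $\mathscr{R}_I$ there is $a \in G$ with $p + \lambda_a = q + \lambda_a$, and by $\mathscr{R}_L$ there are lower bounds $s \, {\leq}_q \, p(x)$ and $t \, {\leq}_q \, q(x)$ for all $x$. The key observation is that $s \, {\leq}_q \, p(x)$ forces $p + \lambda_s = p$, because $\lambda_s(x) = s^0$ and $s^0 + p(x) = p(x)$. Consequently, applying $\textbf{L}_G \times \textbf{L}_G \subseteq \rho$ to the constant pair $(\lambda_s, \lambda_{s+a}) = (\lambda_s, \lambda_s + \lambda_a)$ and adding $p$ gives $(p + \lambda_s, \, p + \lambda_s + \lambda_a) = (p, \, p + \lambda_a) \in \rho$; the same argument with $t$ yields $(q, \, q + \lambda_a) \in \rho$. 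Combining these with the equality $p + \lambda_a = q + \lambda_a$ produces the chain $p \; \rho \; (p + \lambda_a) = (q + \lambda_a) \; \rho \; q$, so $(p,q) \in \rho$. This gives $\mathscr{R}|_{_E} \subseteq \rho$ for every non-identity congruence $\rho$, proving that $E$ is subdirectly irreducible with monolith $\mathscr{R}|_{_E}$.
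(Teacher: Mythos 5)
Your proof is correct and follows essentially the same route as the paper's: you collapse an arbitrary non-identity congruence onto all of $\textbf{L}_G$ using separation by idempotents together with the maps $\mu_{a,b,c}$ and the identities $f \cdot \lambda_a = \lambda_{f(a)}$, $\lambda_a + \lambda_b = \lambda_{a+b}$, and then lift to an arbitrary pair $(p,q) \in {\mathscr{R}|}_{_E}$ via the absorption $p + \lambda_s = p$ and the $\mathscr{R}_I$-witness $p + \lambda_a = q + \lambda_a$, exactly as in the paper's final paragraph. The only cosmetic difference is in the middle step: where the paper applies the one-shot sandwich $\mu_{b_1,b_2,\varphi(e)} \cdot \varphi \cdot \lambda_e$ versus $\mu_{b_1,b_2,\varphi(e)} \cdot \psi \cdot \lambda_e$ after the WLOG choice $\psi(e) \nleq \varphi(e)$, you first pass to the distinct constant pair $(\lambda_{f(e)}, \lambda_{g(e)})$ and symmetrize by adding $\lambda_u$, $\lambda_v$ to reach a strictly comparable idempotent pair $(\lambda_{c_1}, \lambda_{c_2})$ before left-multiplying by $\mu_{g_1,g_2,c_1}$ --- this avoids the WLOG but rests on the identical computation.
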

	
\begin{proof}		
Clearly, ${\mathscr{R}|}_{_E} = (\mathscr{R}_I \cap \mathscr{R}_L)\cap (E\times E) = \mathscr{R}\cap (E\times E)$ is a congruence on $E$.
		
For any $a, b\in G$, we need to show that $(\lambda_a , \lambda_b)\in \mathscr{R}$. Clearly, $a \, {\leq}_q \, \, \lambda_a(x)$, $b \, {\leq}_q \, \, \lambda_b(x)$, for all $x\in G$ implies $(\lambda_a, \lambda_b) \in \mathscr{R}_L$. Also, $(\lambda_a+\lambda_{a+b})(x) = a+a'+b+b' = (\lambda_b+\lambda_{a+b})(x)$, for all $x\in G$ implies $\lambda_a+\lambda_{a+b} = \lambda_b+\lambda_{a+b}$ and thus $(\lambda_a, \lambda_b) \in \mathscr{R}_I$. Hence $(\lambda_a , \lambda_b) \in \mathscr{R}_I \cap \mathscr{R}_L = \mathscr{R}$. Since $E(G)$ contains at least two idempotents, we must have ${\mathscr{R}|}_{_E} \neq {\epsilon}_{_E}$. We now show that ${\mathscr{R}|}_{_E}$ is the monolith of $E$.
		
Let $\mathscr{S}$ be any congruence on $E$ such that $\mathscr{S} \neq {\epsilon}_{_E}$. There exists a pair $(\varphi , \psi) \in \mathscr{S}$ such that $\varphi \neq \psi$. Since $E$ is separated by idempotents, so   there exists an element $e\in E(G)$ such that $\varphi(e) \neq \psi(e)$. Here $E(G)$ is a semilattice and $\varphi(e), \psi(e) \in E(G)$. Without loss of generality, we can consider $\psi(e) \nleq \varphi(e)$. (If $\varphi(e) \nleq \psi(e)$, then we consider $(\psi, \varphi)$ instead of the pair $(\varphi, \psi)$.)
		
Let $b_1, b_2$ be two arbitrary elements of $G$ such that $b_1 \, {\leq}_q \, b_2$. Then $\mu_{_{b_1,b_2,\varphi(e)}}$ belongs to $E$. Since $\mathscr{S}$ is a congruence, we have $(\mu_{_{b_1,b_2,\varphi(e)}}\cdot \varphi \cdot \lambda_{e}, \, \, \,  \mu_{_{b_1,b_2,\varphi(e)}} \cdot \psi \cdot \lambda_{e})\in \mathscr{S}$, i.e, $(\lambda_{b_1} , \lambda_{b_2})\in \mathscr{S}$. Thus, for any two elements $b_1, b_2 \in G$ with $b_1 \, {\leq}_q \, b_2$ implies $(\lambda_{b_1} , \lambda_{b_2})\in \mathscr{S}$.
		
Let $c_{_1},c_{_2}$ be any two elements of $G$. Let $c=c_{_1}+c_{_2}$. Then it is easy to verify $c_{_1} \, {\leq}_q \, c$ and $c_{_2} \, {\leq}_q \, c$. Therefore $(\lambda_{c_1}, \lambda_{c}) \in \mathscr{S}$ and $(\lambda_{c_2} ,\lambda_{c}) \in \mathscr{S}$ and hence $(\lambda_{c_1} ,\lambda_{c_2})\in \mathscr{S}$. Therefore $(\lambda_{p} ,\lambda_{q}) \in \mathscr{S}$, for any two elements $p, q\in G$.
		
Let $(f, g)\in {\mathscr{R}|}_{_E}$ and $f \neq g$. There exist elements $u, v, r \in G$ such that $u \, {\leq}_q \, \, f(x)$, $v \, {\leq}_q \, \, g(x)$, for all $x \in G$ and $f+\lambda_{r} = g+\lambda_{r}$. Here $u \, {\leq}_q \, \, f(x)$, for all $x \in G$ implies $u+u'+f(x) = f(x)$ for all $x \in G$, i.e., $\lambda_{u}(x)+f(x) = f(x)$ for all $x\in G$, i.e., $\lambda_u+f = f$ and thus $(\lambda_{u}+ f, f) \in \mathscr{S}$. Similarly, from $v \, {\leq}_q \, \, g(x)$ for all $x \in G$ implies $(\lambda_v+g, g) \in \mathscr{S}$. Since $(\lambda_{u} ,\lambda_{r}) \in \mathscr{S}$ and $\mathscr{S}$ is a congruence, we have $(\lambda_{u}+f, \lambda_{r}+f) \in \mathscr{S}$. Therefore $(f , \lambda_{r}+f) \in \mathscr{S}$. Similarly, we can prove that $(\lambda_{r}+g, g) \in \mathscr{S}$. Thus, $f \, \, \mathscr{S} \, \, (f+\lambda_r) = (g+\lambda_r) \, \, \mathscr{S} \, \, g$ and hence $(f, g)\in \mathscr{S}$. Therefore, ${\mathscr{R}|}_{_E} \subseteq \mathscr{S}$ for any congruence $\mathscr{S}$ on $E$ such that $\mathscr{S} \neq {\epsilon}_{_E}$. Thus ${\mathscr{R}|}_{_E}$ is the monolith of $E$ and $E$ is subdirectly irreducible.
\end{proof}

\begin{cor}
Let $(G, +)$ be a commutative inverse semigroup containing at least two idempotents. If $End(G)$ is separated by idempotents, then it is subdirectly irreducible and its monolith is $\mathscr{R}$.
\end{cor}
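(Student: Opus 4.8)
The plan is to derive the corollary as an immediate specialization of Theorem \ref{th:monolith}, taking $E = End(G)$. First I would check that this choice of $E$ meets all the hypotheses of the theorem. The semigroup $G$ is assumed to be a commutative inverse semigroup with at least two idempotents, which is exactly the standing hypothesis of the theorem. Moreover, $End(G)$ is trivially a subsemiring of itself, and $\textbf{M}_G$ is by construction a subsemiring of $End(G)$, so the inclusion $\textbf{M}_G \subseteq End(G)$ holds automatically. The remaining requirement, that $E$ be separated by idempotents, is precisely the hypothesis imposed in the corollary. Hence Theorem \ref{th:monolith} applies with $E = End(G)$.

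Applying the theorem then yields at once that $End(G)$ is subdirectly irreducible with monolith ${\mathscr{R}|}_{_{End(G)}} = \mathscr{R} \cap (End(G) \times End(G))$. The last step is the bookkeeping observation that, since $\mathscr{R} = \mathscr{R}_I \cap \mathscr{R}_L$ is defined as a relation on $End(G)$, we already have $\mathscr{R} \subseteq End(G) \times End(G)$; intersecting with $End(G) \times End(G)$ therefore changes nothing, and ${\mathscr{R}|}_{_{End(G)}} = \mathscr{R}$. This identifies the monolith as $\mathscr{R}$, as claimed.

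I do not expect any genuine difficulty here: all the substance of the result is carried by Theorem \ref{th:monolith}, and the corollary merely records the special case in which the ambient subsemiring is all of $End(G)$. The only point worth stating explicitly is that the generating endomorphisms $\mu_{a,b,c}$ used throughout the proof of the theorem are, of course, available inside $E = End(G)$, so that no extra verification of the containment $\textbf{M}_G \subseteq E$ is needed beyond the definition of $\textbf{M}_G$.
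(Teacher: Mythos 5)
Your proposal is correct and matches the paper's intent exactly: the paper states this corollary without a separate proof precisely because it is the specialization $E = End(G)$ of Theorem \ref{th:monolith}, and your verification of the hypotheses (including $\textbf{M}_G \subseteq End(G)$ holding by definition) together with the observation that ${\mathscr{R}|}_{_{End(G)}} = \mathscr{R}$ is all that is needed.
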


\section{Simplicity}

For a commutative inverse semigroup $(G, +)$, the set $\textbf{E}_{E(G)} = End\Bigl(E(G)\Bigr)$ is a subsemiring of the additive inverse semiring $End(G)$ and also $\textbf{M}_{E(G)}$ is a subsemiring of 
$\textbf{M}_{G}$. 

\begin{theorem}
Let $(G, +)$ be a commutative inverse semigroup and $E$ be a subsemiring of $\textbf{E}_{E(G)}$ such that 
$\textbf{M}_{E(G)} \subseteq E$.
	
(i) If $E(G)$ has both a least and a largest element with respect to the partial order relation ${\leq}$, then $E$ is simple.	
	
(ii) If $E$ is simple and $\textbf{id}_{E(G)} \in E$, then $E(G)$ has both a least and a largest element with respect to the partial order relation ${\leq}$.
\end{theorem}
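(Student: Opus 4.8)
The plan is to deduce both statements from the monolith description obtained in Theorem~\ref{th:monolith}, applied not to $G$ but to the semilattice $E(G)$ itself, viewed as a commutative inverse semigroup all of whose elements are idempotent. In this setting $E(E(G)) = E(G)$, the quasi-order ${\leq}_q$ is the semilattice order ${\leq}$, and the condition ``separated by idempotents'' is automatically satisfied by every subset of $\textbf{E}_{E(G)}$ with at least two elements (two endomorphisms that differ, differ at an idempotent). Hence, whenever $|E(G)| \geq 2$, Theorem~\ref{th:monolith} tells us that any subsemiring $E$ with $\textbf{M}_{E(G)} \subseteq E$ is subdirectly irreducible with monolith ${\mathscr{R}|}_{_E}$, where $\mathscr{R} = \mathscr{R}_I \cap \mathscr{R}_L$. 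Since the monolith is the least non-identity congruence, $E$ is simple if and only if ${\mathscr{R}|}_{_E} = E \times E$, that is, if and only if every pair of elements of $E$ is both $\mathscr{R}_I$- and $\mathscr{R}_L$-related.

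For (i) I would read off the two order conditions directly. If $E(G)$ has a least element, then every subset of $E(G)$ is lower bounded by it, so by Definition~\ref{def:con2} every endomorphism has lower-bounded range and $\mathscr{R}_L$ is the universal relation. If $E(G)$ has a largest element $1$, then $f(x) \leq 1$ gives $f + \lambda_1 = \lambda_1 = g + \lambda_1$ for all $f, g \in E$, so by Definition~\ref{def:con1} the relation $\mathscr{R}_I$ is universal as well. Consequently $\mathscr{R}$ is universal, ${\mathscr{R}|}_{_E} = E \times E$, and $E$ is simple. (The least and largest elements are distinct, for otherwise $E(G)$ and hence $E$ is trivial; thus $|E(G)| \geq 2$ and the reduction above is legitimate.)

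For (ii) I would run the equivalence in reverse. If $E$ is simple it is in particular nontrivial, forcing $|E(G)| \geq 2$, and simplicity gives ${\mathscr{R}|}_{_E} = E \times E$; this is exactly the point at which $\textbf{id}_{E(G)} \in E$ enters, since the identity is the endomorphism whose range is all of $E(G)$. Choose any constant map $\lambda_c \in \textbf{L}_{E(G)} \subseteq E$; as $|E(G)| \geq 2$ we have $\textbf{id}_{E(G)} \neq \lambda_c$, so the (distinct) pair $(\textbf{id}_{E(G)}, \lambda_c)$ lies in $\mathscr{R}_I \cap \mathscr{R}_L$. Membership in $\mathscr{R}_L$ forces the range $E(G)$ of $\textbf{id}_{E(G)}$ to be lower bounded, and the witnessing lower bound is then a least element of $E(G)$. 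Membership in $\mathscr{R}_I$ yields $a \in E(G)$ with $x + a = c + a =: d$ for every $x \in E(G)$; a one-line computation, $x + d = (x + c) + a = d$, shows $x \leq d$ for all $x$, so $d$ is a largest element of $E(G)$.

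The bookkeeping (the reduction to the semilattice, automatic separation, nontriviality) is routine; the substantive content is the pair of short computations at the ends of (i) and (ii): that a largest element collapses $\mathscr{R}_I$, and conversely that an $\mathscr{R}_I$-relation between the identity and a constant manufactures a largest element. The one place to be careful is that $\mathscr{R}_L$ relates two \emph{distinct} maps only when both have lower-bounded range, which is why in (ii) I work with the genuinely distinct pair $(\textbf{id}_{E(G)}, \lambda_c)$ rather than with the identity alone.
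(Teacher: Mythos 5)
Your proof is correct and takes essentially the same approach as the paper: both apply Theorem \ref{th:monolith} to the semilattice $E(G)$ viewed as a commutative inverse semigroup (where separation by idempotents is automatic) and reduce simplicity to the condition ${\mathscr{R}|}_{_E} = E\times E$, with a least element controlling $\mathscr{R}_L$ and a largest element controlling $\mathscr{R}_I$. The only difference is cosmetic --- the paper proves (ii) contrapositively via $(\textbf{id}_{E(G)}, \lambda_e) \notin {\mathscr{R}|}_{_E}$, while you argue forward from the pair $(\textbf{id}_{E(G)}, \lambda_c)$, and you are in fact slightly more careful than the paper about the degenerate case $|E(G)| = 1$ and about how $\mathscr{R}_L$ treats distinct maps.
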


\begin{proof}
Clearly, $E$ satisfies all conditions of Theorem \ref{th:monolith}. By Theorem \ref{th:monolith}, it follows that $E$ is simple if and only if the monolith ${\mathscr{R}|}_{_E}$ of $E$ is equal to $E\times E$. If $E(G)$ has both a least and a largest element with respect to partial order relation ${\leq}$, then it is clear that ${\mathscr{R}|}_{_E} = E \times E$. If $E(G)$	has no least element then the range of $\textbf{id}_{E(G)}$ is not lower bounded subset of $E(G)$ and hence $(\textbf{id}_{E(G)}, \lambda_{e}) \notin {\mathscr{R}|}_{_E}$, for any $e \in E(G)$. If $E(G)$ has the least element $e_{_0}$ but no largest element, then for each $e \in E(G)\smallsetminus \{e_{_0}\}$, $(\textbf{id}_{E(G)}, \lambda_{e}) \notin {\mathscr{R}|}_{_E}$, for any $e \in E(G)\smallsetminus \{e_{_0}\}$.
\end{proof}

\begin{cor}
Let $(G, +)$ be a commutative inverse semigroup. Then $\textbf{E}_{E(G)}$ is simple if and only if $E(G)$ has both a least and a largest element.
\end{cor}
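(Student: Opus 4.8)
The plan is to read off the corollary as the instance $E = \textbf{E}_{E(G)}$ of the preceding theorem; once this choice is justified, both implications fall out of parts (i) and (ii) with no further work. Thus the only real content is to check that the full endomorphism semiring $\textbf{E}_{E(G)} = End(E(G))$ qualifies as an admissible $E$ for \emph{both} parts of that theorem simultaneously.

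First I would confirm the standing hypotheses. Taking $E = \textbf{E}_{E(G)}$, it is tautologically a subsemiring of $\textbf{E}_{E(G)}$, and the containment $\textbf{M}_{E(G)} \subseteq \textbf{E}_{E(G)}$ is immediate from the definition of $\textbf{M}_{E(G)}$ as a subsemiring of $End(E(G))$. Hence the theorem applies to $E = \textbf{E}_{E(G)}$. For the converse direction I additionally record that $\textbf{id}_{E(G)} \in \textbf{E}_{E(G)}$, which holds because the identity map on $E(G)$ is always an endomorphism; this is precisely the extra hypothesis that part (ii) requires, and it is automatic for the full endomorphism semiring.

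With these verifications in hand, the two implications are immediate: if $E(G)$ has both a least and a largest element, part (i) gives that $\textbf{E}_{E(G)}$ is simple, and conversely, if $\textbf{E}_{E(G)}$ is simple, then part (ii) (whose hypothesis $\textbf{id}_{E(G)} \in E$ we have just checked) gives that $E(G)$ has both a least and a largest element. The argument is pure bookkeeping, so there is no genuine obstacle; the only point deserving care is the degenerate case in which $E(G)$ is a singleton, where least and largest coincide yet $\textbf{E}_{E(G)}$ collapses to a one-element, hence non-simple, semiring. One therefore reads the statement under the blanket assumption that $E(G)$ is nontrivial, in keeping with the running hypotheses of the paper.
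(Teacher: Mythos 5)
Your proposal is correct and is exactly the paper's (implicit) argument: the corollary is stated without separate proof precisely because it is the instance $E = \textbf{E}_{E(G)}$ of the preceding theorem, with the containments $\textbf{M}_{E(G)} \subseteq \textbf{E}_{E(G)}$ and $\textbf{id}_{E(G)} \in \textbf{E}_{E(G)}$ holding trivially, just as you verify. Your closing caveat about the singleton $E(G)$ is also apt, since the underlying machinery (Theorem \ref{th:monolith}, on which the simplicity theorem rests) already assumes at least two idempotents, so your reading matches the paper's standing hypotheses.
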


\begin{cor}
Let $(G, +)$ be a commutative inverse semigroup and $E(G)$ is finite. Then $\textbf{E}_{E(G)}$ is simple if and only if $E(G)$ is a lattice with respect to partial order relation ${\leq}$.	
\end{cor}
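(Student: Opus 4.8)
The plan is to read off this finite case from the preceding corollary, which already equates simplicity of $\textbf{E}_{E(G)}$ with the existence of both a least and a largest element of $E(G)$; the only remaining work is the purely order-theoretic fact that, for \emph{finite} $E(G)$, having both a least and a largest element is the same as being a lattice. At the outset I would record that $(E(G), +)$ is a join-semilattice: for idempotents $e, f$ the relation $e \leq f$ means $e + f = f$ (since $e^0 = e$ and $a \leq_q b$ reads $a^0 + b = b$), so $e + f$ is the supremum $e \vee f$, and consequently every finite subset of $E(G)$ has a supremum given by its sum.

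First I would dispose of the largest element for free. As $E(G)$ is finite, $\top = \sum_{e \in E(G)} e$ exists and satisfies $g + \top = \top$ for every $g$, so $\top$ is a largest element. Hence the criterion ``least and largest'' from the previous corollary collapses, in the finite setting, to the single demand that $E(G)$ have a least element, and it suffices to prove that a finite join-semilattice is a lattice if and only if it has a least element. The forward implication is trivial, since a finite lattice has the least element $\bigwedge_{e \in E(G)} e$. For the converse, assume $E(G)$ has a least element $0$; given $a, b$, the set $L = \{x \in E(G) : x \leq a, \, x \leq b\}$ of common lower bounds contains $0$, hence is nonempty and finite, so $m = \bigvee L$ exists. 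Because $a$ and $b$ are upper bounds of $L$ we get $m \leq a$ and $m \leq b$, and every common lower bound lies in $L$ and therefore below $m$; thus $m = a \wedge b$, and together with the joins already present this makes $E(G)$ a lattice.

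Chaining the equivalences then finishes the proof: by the previous corollary $\textbf{E}_{E(G)}$ is simple iff $E(G)$ has a least and a largest element; finiteness supplies the largest element, so this is equivalent to $E(G)$ having a least element, which in turn is equivalent to $E(G)$ being a lattice. This corollary is genuinely elementary once the previous one is in hand, so I do not expect a serious obstacle; the one point I would be careful to flag is the twofold use of finiteness—both to guarantee the top element and to guarantee that the supremum of the lower-bound set $L$ actually exists—since an infinite join-semilattice with a least element need not be a lattice.
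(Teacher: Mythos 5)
Your proof is correct and matches the paper's (implicit) route: the paper states this corollary without proof, intending exactly the reduction you carry out — apply the preceding corollary (simplicity iff least and largest element) and observe that for a finite join-semilattice the largest element $\sum_{e \in E(G)} e$ always exists, so simplicity reduces to the existence of a least element, which for a finite join-semilattice is equivalent to being a lattice via the supremum-of-lower-bounds construction of meets. Your explicit flagging of the two uses of finiteness is a sound precaution but introduces nothing beyond the argument the paper takes for granted.
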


\begin{theorem}
The semiring $\textbf{M}_{G}$ is simple for any commutative inverse semigroup $G$.
\end{theorem}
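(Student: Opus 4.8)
The plan is to derive the simplicity of $\textbf{M}_G$ from Theorem \ref{th:monolith} applied to the subsemiring $E = \textbf{M}_G$ itself, and then to argue that its monolith is as large as possible. Two preliminary points bring that theorem to bear: nontriviality and separation by idempotents. Nontriviality holds exactly when $E(G)$ has at least two idempotents, which I take as the standing hypothesis consistent with the rest of the paper (if $E(G)$ had a single idempotent then $G$ is a group, every $\lambda_a$ and every $\mu_{a,b,c}$ collapses onto the unique idempotent, and $\textbf{M}_G$ is a one-element semiring); under this hypothesis $\lambda_{e_1} \neq \lambda_{e_2}$ for distinct idempotents $e_1, e_2$ already give $|\textbf{M}_G| \geq 2$. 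For separation I would first record, by induction on the generation of $\textbf{M}_G$, that every $f \in \textbf{M}_G$ satisfies $f(G) \subseteq E(G)$ and $f(x) = f(x^0)$ for all $x$. This holds for the generators $\mu_{a,b,c}$ by Remark \ref{rem:monolith}, and is preserved under pointwise sums and under composition (for $f \cdot g$ one uses $g(x) \in E(G)$ together with $f(y) = f(y^0)$). Consequently, if $f \neq g$ in $\textbf{M}_G$ then $f(x) \neq g(x)$ for some $x$, whence $f(x^0) \neq g(x^0)$ with $x^0 \in E(G)$, so the idempotent $x^0$ separates $f$ and $g$. Thus Theorem \ref{th:monolith} applies, and $\textbf{M}_G$ is subdirectly irreducible with monolith ${\mathscr{R}|}_{_{\textbf{M}_G}}$, where $\mathscr{R} = \mathscr{R}_I \cap \mathscr{R}_L$.

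Exactly as in the proof of the preceding simplicity theorem, a subdirectly irreducible algebra is simple if and only if its monolith equals the universal congruence. So it remains to prove ${\mathscr{R}|}_{_{\textbf{M}_G}} = \textbf{M}_G \times \textbf{M}_G$, i.e. that every pair $(f, g)$ of elements of $\textbf{M}_G$ lies in both $\mathscr{R}_I$ and $\mathscr{R}_L$. The technical heart is a structural claim: every $f \in \textbf{M}_G$ has finite and lower-bounded range inside the semilattice $E(G)$. Finiteness propagates through the generation since each $\mu_{a,b,c}$ has range $\{a^0, b^0\}$, a composite satisfies $(f \cdot g)(G) \subseteq f(G)$, and a sum has range contained in $\{u + v : u \in f(G),\, v \in g(G)\}$. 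Lower-boundedness I obtain by a parallel induction: $\mu_{a,b,c}$ is bounded below by $a^0$ (from $a \, {\leq}_q \, b$, i.e. $a^0 \leq b^0$); if $f$ is bounded below by $\ell$ then so is $f \cdot g$, its range lying inside that of $f$; and if $f$ is bounded below by $\ell$ then $f + g$ is too, using $f(x) \, {\leq}_q \, f(x) + g(x)$ and transitivity of ${\leq}_q$.

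These two facts close the argument. Lower-boundedness of every range gives $(f, g) \in \mathscr{R}_L$ for every pair. For $\mathscr{R}_I$, given $f, g$ I set $R = f(G) \cup g(G)$, a finite subset of $E(G)$, and let $e = \sum_{r \in R} r$ be its join in the semilattice. Then $f(x) + e = e = g(x) + e$ for all $x$, so $f + \lambda_e = g + \lambda_e$ (both the constant map onto $e$), giving $(f, g) \in \mathscr{R}_I$. Hence ${\mathscr{R}|}_{_{\textbf{M}_G}} = (\mathscr{R}_I \cap \mathscr{R}_L) \cap (\textbf{M}_G \times \textbf{M}_G) = \textbf{M}_G \times \textbf{M}_G$, the monolith is the universal congruence, and $\textbf{M}_G$ is simple.

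I expect the structural claim to be the main obstacle. One must verify carefully that neither finiteness nor lower-boundedness of ranges is destroyed when passing to sums and composites, and the formation of the join $e$ rests squarely on finiteness, since a join-semilattice need not admit meets or upper bounds of infinite subsets. The existence of lower bounds, by contrast, does not follow from finiteness alone and genuinely requires the inductive propagation above. Once these range properties are in hand, the verification that the two generating congruences $\mathscr{R}_I, \mathscr{R}_L$ each restrict to the universal relation on $\textbf{M}_G$, and the invocation of Theorem \ref{th:monolith}, are routine.
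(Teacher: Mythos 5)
Your proof is correct, and while it reaches the same reduction as the paper --- apply Theorem \ref{th:monolith} to $E = \textbf{M}_G$ and show the monolith ${\mathscr{R}|}_{_E}$ is all of $E \times E$ --- the mechanism is genuinely different. The paper works generator-by-generator: it shows any two $\mu$'s are $\mathscr{R}$-equivalent via the single identity $\mu_{a,b,c}+\lambda_{a+b+p+q} = \mu_{p,q,r}+\lambda_{a+b+p+q}$ together with the lower bounds $a$ and $p$, then uses Remark \ref{rem:monolith} (a product of $\mu$'s is again a $\mu$, so every element of $\textbf{M}_G$ is a finite sum of $\mu$'s) and the congruence property of $\mathscr{R}$ to propagate equivalence to arbitrary elements. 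You instead prove a structural lemma by induction on generation --- every $f \in \textbf{M}_G$ has finite, lower-bounded range inside $E(G)$ and factors through $x \mapsto x^0$ --- and then verify the defining conditions of $\mathscr{R}_L$ and $\mathscr{R}_I$ directly for an arbitrary pair, the latter by taking $\lambda_e$ with $e$ the join of the union of the two finite ranges; all the inductive steps (ranges of sums and composites, preservation of lower bounds, $f(x)+e=e$) check out. Your route is longer but buys two things the paper leaves unaddressed: it explicitly verifies the separation-by-idempotents hypothesis of Theorem \ref{th:monolith} for $\textbf{M}_G$, which the paper's proof invokes without checking, and it bypasses the slightly glossed step ``any two finite sums of $\mu$'s are ${\mathscr{R}|}_{_E}$ equivalent,'' which for sums of unequal lengths needs an extra word (e.g. comparing each sum to a constant map, using $\lambda_a + \lambda_b = \lambda_{a+b}$). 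Your standing hypothesis $|E(G)| \geq 2$ is also a genuine repair rather than pedantry: when $G$ is a group, every $\mu_{a,b,c}$ is the constant map onto the identity, so $\textbf{M}_G$ is a one-element semiring and is not simple under the paper's definition (which requires nontriviality); the theorem's phrase ``any commutative inverse semigroup'' is strictly an overstatement that your hypothesis corrects.
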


\begin{proof} Let $E = \textbf{M}_{G}$. We show that $E$ is simple. For this it is enough to show that the monolith of $E$ is $E\times E$. First we show that any two $\mu$'s are ${\mathscr{R}|}_{_E}$ equivalent. For this let $\mu_{a,b,c}; \mu_{p,q,r}$ be two $\mu$'s. Then $a \, {\leq}_q \, \, \mu_{a,b,c}(x)$ for all $x \in G$ and $p \, {\leq}_q \, \, \mu_{p,q,r}(x)$ for all $x \in G$. Moreover, one can easily check that $\mu_{a,b,c}+\lambda_{a+b+p+q} = \mu_{p,q,r}+\lambda_{a+b+p+q}$. Therefore, $(\mu_{a,b,c}, \mu_{p,q,r}) \in {\mathscr{R}|}_{_E}$ and thus any two $\mu$'s are ${\mathscr{R}|}_{_E}$ equivalent. Again, by Remark \ref{rem:monolith}, it follows that product of two $\mu$'s is again another $\mu$'s. Since $E$ is generated by $\mu$'s, it follows that every element of $E$ is a sum of finitely many $\mu$'s. Since any two $\mu$'s are ${\mathscr{R}|}_{_E}$ equivalent, it follows that any two finite sums of $\mu$'s are ${\mathscr{R}|}_{_E}$ equivalent. Consequently, $E = \textbf{M}_{G}$ is simple. 
\end{proof}

\section{Simpleness of endomorphism hemiring of a commutative inverse monoid}

Let $G$ be a commutative inverse monoid with identity $0$ and $End_0(G)$ be the hemiring of all $0$ fixing endomorphisms of $G$. For every pair $a, b$ of elements of $G$, we define $\tau_{a,b} \in End_0(G)$ by 
$\tau_{a,b} = \mu_{_{0, b, a}}$. Let $T_G$ be the subsemiring of $End_0(G)$ generated by all endomorphisms of the form $\tau_{a, b}$ (where $a, b \in G$). Also, if $R_G = \{\varphi \in End_0(G) \, :$ range of $\varphi$ is a finite subset of $G\}$, then $R_G$ is a subsemiring of $End_0(G)$ with zero. Moreover, $T_G\subseteq R_G$ and $R_G$ is an ideal of $End_0(G)$.

\begin{definition} \cite{go}
An element $a$ in a semiring $S$ is said to be infinite if and only if $a + x = a = x + a$ for all $x \in S$. Infinite element in a semiring is unique and is denoted by $\infty$. 
\end{definition}

\begin{lemma} \label{le: leftcomposion}
For any $a,b,c,d \in G$ and $\varphi \in End_0(G)$, we have $\varphi \cdot \tau_{a,b}= \tau_{a,\varphi(b)}$ and 
\begin{align*}
\tau_{c,d} \cdot \varphi \cdot \tau_{a,b} = 
\begin{cases} 
\theta, &\text{if}\ \varphi(b) \, {\leq}_q \, c \\
\tau_{a,d}, &\text{otherwise}.
\end{cases} 
\end{align*}
where $\theta(x)=0$, for all $x \in G$, is the zero element of $End_0(G)$. If $(G, +, 0)$ has an absorbing element $\infty \in G$, then $\tau_{_{0,\infty}}$ is infinite element in $End_0(G)$.
\end{lemma}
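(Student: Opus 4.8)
The plan is to verify the three assertions in turn, working pointwise and repeatedly using two facts: that an endomorphism $\varphi \in End_0(G)$ fixes $0$ and commutes with the map $x \mapsto x^0$ (indeed $\varphi(x^0) = \varphi(x+x') = \varphi(x) + \varphi(x)' = \varphi(x)^0$), and the dictionary of Remark~\ref{rem:monolith} translating $\,{\leq}_q\,$ on $G$ into $\leq$ on the semilattice $E(G)$. Recall first that, since $\tau_{a,b} = \mu_{0,b,a}$ and $0^0 = 0$, one has $\tau_{a,b}(x) = 0$ when $x \, {\leq}_q \, a$ and $\tau_{a,b}(x) = b^0$ when $x \, {\nleq}_q \, a$. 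For the identity $\varphi \cdot \tau_{a,b} = \tau_{a,\varphi(b)}$ I would evaluate the left-hand side at an arbitrary $x$: if $x \, {\leq}_q \, a$ then $(\varphi\cdot\tau_{a,b})(x) = \varphi(0) = 0$, while if $x \, {\nleq}_q \, a$ then $(\varphi\cdot\tau_{a,b})(x) = \varphi(b^0) = \varphi(b)^0$; these are exactly the two values of $\tau_{a,\varphi(b)}(x)$, so the maps agree.

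For the triple product I would first rewrite $\tau_{c,d}\cdot\varphi\cdot\tau_{a,b} = \tau_{c,d}\cdot\tau_{a,\varphi(b)}$ using the identity just proved, and then evaluate at $x$. When $x \, {\leq}_q \, a$ the inner map returns $0$, and since $0$ is the least idempotent we have $0 \, {\leq}_q \, c$, so $\tau_{c,d}(0) = 0$. When $x \, {\nleq}_q \, a$ the inner map returns $\varphi(b)^0$, and by Remark~\ref{rem:monolith} the test $\varphi(b)^0 \, {\leq}_q \, c$ is equivalent to $\varphi(b) \, {\leq}_q \, c$; accordingly $\tau_{c,d}(\varphi(b)^0)$ equals $0$ if $\varphi(b) \, {\leq}_q \, c$ and $d^0$ otherwise. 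Collecting the cases: if $\varphi(b) \, {\leq}_q \, c$ the composite is identically $0$, that is $\theta$; if $\varphi(b) \, {\nleq}_q \, c$ the composite sends $x$ to $0$ for $x \, {\leq}_q \, a$ and to $d^0$ for $x \, {\nleq}_q \, a$, which is precisely $\tau_{a,d}$.

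Finally, for the infiniteness of $\tau_{0,\infty}$: since $\infty$ is absorbing it is idempotent, so $\infty = \infty^0$ is the largest element of $E(G)$ while $0 = 0^0$ is the least; thus $\tau_{0,\infty}(x) = 0$ exactly on $\{x : x \, {\leq}_q \, 0\} = \{x : x^0 = 0\}$ (the group of units) and $\tau_{0,\infty}(x) = \infty$ elsewhere. I would show $\tau_{0,\infty} + \tau_{a,b} = \tau_{0,\infty}$ for every generator $\tau_{a,b}$: on a unit $x$ we have $x \, {\leq}_q \, a$ for every $a$ (as $x^0 = 0$ lies below every idempotent), so $\tau_{a,b}(x) = 0$ and both sides equal $0$; on a non-unit $x$ the first summand is $\infty$, which absorbs $\tau_{a,b}(x)$. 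Since every element of $T_G$ is a finite sum of maps each of which is $\theta$ or some $\tau_{a_i,b_i}$ (products of $\tau$'s are again $\tau$'s or $\theta$ by the formula just established), absorbing the summands one at a time gives $\tau_{0,\infty} + t = \tau_{0,\infty}$ for all $t \in T_G$, and commutativity of addition yields the two-sided condition. I expect the delicate point of the whole lemma to be exactly this behaviour on the unit group: the naive guess that $\tau_{0,\infty}$ is the constant map to $\infty$ fails, since that map neither fixes $0$ nor is additive once nonzero units exist, and an \emph{arbitrary} $\psi \in End_0(G)$ such as $\mathrm{id}$ need not send units to $0$. What rescues the argument is that every $\tau$ annihilates the whole unit group, so absorption holds against all generators of $T_G$; the reader should note that the statement at the level of $End_0(G)$ itself rests on every endomorphism killing the units, which is automatic precisely when the group of units is trivial.
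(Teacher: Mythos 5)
Your verification of the two composition formulas is correct and essentially identical to the paper's: both arguments evaluate pointwise using $\varphi(0)=0$ and $\varphi(b+b')=\varphi(b)+\bigl(\varphi(b)\bigr)'$, the only cosmetic difference being that the paper obtains the triple product by applying the first identity twice, $\tau_{c,d}\cdot\varphi\cdot\tau_{a,b}=\tau_{c,d}\cdot\tau_{a,\varphi(b)}=\tau_{a,\tau_{c,d}(\varphi(b))}$, where you case-split directly; the content is the same.

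On the final claim you have found a genuine error in the paper, not merely taken a different route. The paper's proof rewrites the case condition $x\,{\leq}_q\,0$ as $x=0$ and then computes $(\psi+\tau_{0,\infty})(x)=\psi(x)+\infty=\infty$ for all $x\neq 0$ and all $\psi\in End_0(G)$; but $x\,{\leq}_q\,0$ means $x+x'=0$, i.e.\ that $x$ lies in the group of units $H_0$, so this rewriting is valid only when $H_0=\{0\}$. Your diagnosis is confirmed by a concrete counterexample: take $G=\mathbb{Z}_3\cup\{\infty\}$ with $\infty$ absorbing (two idempotents, as in the later theorem); then $\tau_{0,\infty}$ annihilates $\mathbb{Z}_3$, and with $\psi=\mathrm{id}\in End_0(G)$ one gets $(\mathrm{id}+\tau_{0,\infty})(\overline{1})=\overline{1}\neq\overline{0}=\tau_{0,\infty}(\overline{1})$, so $\tau_{0,\infty}$ is not infinite in $End_0(G)$. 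Your salvage is correct in every step: each $\tau_{a,b}$ kills $H_0$ because $x^0=0$ lies below every idempotent, products of $\tau$'s are again $\tau$'s or $\theta$ by the first formula, so summand-by-summand absorption gives $\tau_{0,\infty}+t=\tau_{0,\infty}$ for all $t\in T_G$; thus $\tau_{0,\infty}$ is infinite in $T_G$, and in $End_0(G)$ precisely when $H_0$ is trivial (since $\mathrm{id}\in End_0(G)$), exactly as you say. One remark worth adding: the place where the paper invokes the full claim --- the congruence-simplicity theorem, which needs $\alpha+\tau_{0,\infty}=\tau_{0,\infty}$ for all $\alpha\in E$ --- can still be rescued, because $\alpha+\tau_{0,\infty}$ and $\tau_{0,\infty}$ agree on every idempotent (for $e\in E(G)$, $e\,{\leq}_q\,0$ forces $e=0$), and that theorem assumes $E$ is separated by idempotents; so your restricted version of the lemma, combined with separation, suffices for the application, while the lemma as stated should carry the additional hypothesis that the unit group of $G$ is trivial.
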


\begin{proof}
Since $\varphi(0)=0$ for all $\varphi \in End_0(G)$, so 
\begin{align*} 
\varphi \cdot \tau_{a,b}(x) &= \begin{cases} 
\varphi(0), &\text{if}\ x \, {\leq}_q \, a \\
\varphi(b+b'), &\text{otherwise}
\end{cases}
&= \begin{cases} 
0, &\text{if}\ x \, {\leq}_q \, a \\
\varphi(b)+\varphi'(b), &\text{otherwise}
\end{cases}
&= \tau_{a,\varphi(b)}(x) &\text{for all $x \in G.$}
\end{align*}

Applying this formula we get
\begin{align*}
\tau_{c,d} \cdot \varphi \cdot \tau_{a,b} = \tau_{c,d} \cdot \tau_{a,\varphi(b)} =\tau_{a,\tau_{c,d}(\varphi(b))} 
= \begin{cases} 
\theta, &\text{if}\ \varphi(b) \, \, {\leq}_q \, c \\
\tau_{a,d}, &\text{otherwise}
.
\end{cases}
\end{align*}

%Now,
	%\begin{align*}
	%\tau_{a,\tau_{c,d}(\varphi(b))}(x) = \begin{cases} 
	%0, &\text{if}\ x \, {\leq}_q \, a \\
	%\tau_{c,d}(\varphi(b))+\Big(\tau_{c,d}(\varphi(b))\Big)', &\text{if}\ x \, {\nleq}_q \, a
	%\end{cases}\\
	%=\begin{cases} 
	%0, &\text{if}\ x \, {\leq}_q \, a \\
	%0, &\text{if}\ x \, {\nleq}_q \, a \ \text{and}\ \varphi(b) \, \, {\leq}_q \, c \\
	%d+d', &\text{if}\ x \, {\nleq}_q \, a \ \text{and}\ \varphi(b) \, \, {\nleq}_q \, c
	%\end{cases}\\
	%=\begin{cases} 
	%0, &\text{if}\ x \, {\leq}_q \, a \ \text{and}\ \varphi(b) \, \, {\leq}_q \, c\\
	%0, &\text{if}\ x \, {\leq}_q \, a \ \text{and}\ \varphi(b) \, \, {\nleq}_q \, c\\
	%0, &\text{if}\ x \, {\nleq}_q \, a \ \text{and}\ \varphi(b) \, \, {\leq}_q \, c \\
	%d+d', &\text{if}\ x \, {\nleq}_q \, a \ \text{and}\ \varphi(b) \, \, {\nleq}_q \, c
	%\end{cases}\\
	%=\begin{cases} 
	%0, &\text{if}\ x \, {\leq}_q \, a \ \text{and}\ \varphi(b) \, \, {\leq}_q \, c\\
	%0, &\text{if}\ x \, {\nleq}_q \, a \ \text{and}\ \varphi(b) \, \, {\leq}_q \, c \\
	%0, &\text{if}\ x \, {\leq}_q \, a \ \text{and}\ \varphi(b) \, \, {\nleq}_q \, c\\
	%d+d', &\text{if}\ x \, {\nleq}_q \, a \ \text{and}\ \varphi(b) \, \, {\nleq}_q \, c
	%\end{cases}\\
	%=\begin{cases} 
	%	\theta(x), &\text{if}\ \varphi(b) \, \, {\leq}_q \, c \\
	%	\tau_{a,d}(x), &\text{otherwise}.
	%\end{cases}
%\end{align*}

Now,	
\begin{align*}
\tau_{_{0,\infty}}(x)=
\begin{cases}
0, &\text{if}\ x \, {\leq}_q \, 0 \\
\infty, &\text{otherwise}
\end{cases}
= \begin{cases}
0, &\text{if}\ x = 0 \\
\infty, &\text{otherwise}
\end{cases}
 \hspace{1.5em} \text{for all $x \in G$.}
\end{align*}
For any $\psi \in End_0(G)$, $x\in G\setminus \{0\}$, we have $(\psi + \tau_{_{0,\infty}})(x) = \psi(x) + \infty = \infty$, so that $\psi + \tau_{_{0,\infty}} = \tau_{_{0,\infty}}$. Similarly, we can show that 
$\tau_{_{0,\infty}}+\psi = \tau_{_{0,\infty}}$. Therefore, $\tau_{_{0,\infty}}$ is infinite element in 
$End_0(G)$.
 \end{proof}

\begin{lemma}
Let $(G, +, 0)$ be a commutative inverse monoid such that the set $ \{ x \in G \; : \; \varphi(x) \, \, {\leq}_q \, a\}$ contains finite number of elements of $G$ for any $a \in G$ and for any $\varphi \in End_0(G)$. Then $T_G$ is an ideal of $End_0(G)$. In particular, if $G$ is finite, then $T_G$ is an ideal of $End_0(G)$.
\end{lemma}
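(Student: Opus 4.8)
The plan is to show that $T_G$ is closed under both left and right multiplication by an arbitrary $\varphi\in End_0(G)$. The first reduction is that every element of $T_G$ is a finite sum of $\tau$'s: by the computation in Lemma \ref{le: leftcomposion} (applied with $\varphi=\mathrm{id}$), the composite of two generators $\tau_{c,d}\cdot\tau_{a,b}$ is again either $\theta$ or $\tau_{a,d}$, so every finite product of generators is a single generator, and hence the subsemiring they generate consists exactly of the finite sums $\sum_i\tau_{a_i,b_i}$. Since composition distributes over addition, it suffices to verify $\varphi\cdot\tau_{a,b}\in T_G$ and $\tau_{a,b}\cdot\varphi\in T_G$ for a single generator.

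The left-hand case is immediate and needs no hypothesis: by Lemma \ref{le: leftcomposion}, $\varphi\cdot\tau_{a,b}=\tau_{a,\varphi(b)}$, again a generator, so $End_0(G)\cdot T_G\subseteq T_G$.

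The right-hand case is where the finiteness assumption enters and is the main point. Writing $h=\tau_{a,b}\cdot\varphi$, one has $h(x)=0$ when $\varphi(x)\,{\leq}_q\,a$ and $h(x)=b+b'$ otherwise, so $h$ is determined by the set $A=\{x\in G:\varphi(x)\,{\leq}_q\,a\}$, which is finite by hypothesis and contains $0$. The key observation is that $A$ is not merely downward closed for ${\leq}_q$ but is closed under $+$: if $x,y\in A$ then $\varphi(x+y)^0=\varphi(x)^0+\varphi(y)^0\le a^0$, using that $\varphi$ is an endomorphism and that ${\leq}_q$ is governed by the idempotents via $u\,{\leq}_q\,v\iff u^0\le v^0$ (Remark \ref{rem:monolith}). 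A finite, nonempty, $+$-closed subset of $G$ has a ${\leq}_q$-greatest element, namely the idempotent $e_0=\sum_{x\in A}x^0$; one checks that $e_0\in A$ and that $A=\{x\in G:x\,{\leq}_q\,e_0\}$. Consequently $h(x)=0$ exactly when $x\,{\leq}_q\,e_0$, so $\tau_{a,b}\cdot\varphi=\tau_{e_0,b}\in T_G$.

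Combining, for $t=\sum_i\tau_{a_i,b_i}\in T_G$ one gets $\varphi\cdot t=\sum_i\tau_{a_i,\varphi(b_i)}$ and $t\cdot\varphi=\sum_i\tau_{e_0^{(i)},b_i}$, both in $T_G$, so $T_G$ is a two-sided ideal; the concluding ``in particular'' is just the remark that when $G$ is finite every set $\{x:\varphi(x)\,{\leq}_q\,a\}$ is automatically finite. The main obstacle to anticipate is precisely the structural fact that $A$ is $+$-closed (an ideal of the underlying semilattice), not merely a down-set: it is this that forces the a priori complicated function $\tau_{a,b}\cdot\varphi$ to collapse to a single generator $\tau_{e_0,b}$ rather than an unwieldy sum, and without it the claim would genuinely fail since a nonprincipal finite down-set cannot be realized by a finite sum of $\tau$'s.
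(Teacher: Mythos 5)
Your proof is correct and follows essentially the same route as the paper's: left-ideal closure via the formula $\varphi\cdot\tau_{a,b}=\tau_{a,\varphi(b)}$, and right-ideal closure by showing the finite set $\{x\in G: \varphi(x)\,{\leq}_q\,a\}$ is a principal down-set, so that $\tau_{a,b}\cdot\varphi$ collapses to a single generator. Your $e_0=\sum_{x\in A}x^0$ is exactly the idempotent part of the paper's $g=y_1+\cdots+y_n$, and your use of the idempotent-order characterization of ${\leq}_q$ (together with the explicit reduction of $T_G$ to finite sums of $\tau$'s, which the paper leaves implicit) is only a presentational variant of the paper's direct computation.
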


\begin{proof}
Using Lemma \ref{le: leftcomposion}, we get that $T_G$ is a left ideal of $End_0(G)$. Now for $a,b,x \in G$ and $\varphi \in End_0(G)$,
\begin{align*}
\tau_{a,b}(\varphi(x)) = \begin{cases} 
0, &\text{if}\ \varphi(x) \, \, {\leq}_q \, a \\
b+b', &\text{otherwise}.
\end{cases}
\end{align*}
Since $ \{ x \in G \; : \; \varphi(x) \, \, {\leq}_q \, a \}$ contains finite number of elements of $G$, for any $a \in G$ and $\varphi \in End_0(G)$, so we can consider the set $\{y_1, y_2, \ldots, y_n\}$ = $ \{ x \in G \; : \; \varphi(x) \, \, {\leq}_q \, a\}$. Let $g= y_1+y_2+\cdots+y_n$. Then $y_i+y_i'+g = y_i+y_i'+y_1+y_2+\cdots+y_n=y_1+y_2+\cdots+y_n = g$ for all $y_i\in \{y_1, y_2, \ldots, y_n\}$ implies $y_i \, \, {\leq}_q \, g$ for each $y_i \in \{y_1, y_2, \ldots, y_n\}$. This implies $\{ x \in G \; | \; \varphi(x) \, \, {\leq}_q \, \, a  \} \subseteq  \{ x \in G \; | \; x \, \,  {\leq}_q \, \, g  \}$. For the reverse inclusion, let $z \in \{ x \in G \; | \; x \, \, {\leq}_q \, \, g  \}$. Then $z+z'+g = g$. Now, $\varphi(y_i)+\varphi'(y_i)+a=a$, for each $y_i\in \{y_1, y_2, \ldots, y_n\}$ implies $\varphi(y_i)+\varphi(y_i')+a+a'=a+a'$, for each $y_i\in \{y_1, y_2, \ldots, y_n\}$ and hence $\varphi(g)+\varphi(g')+a+a'=a+a'$. Now, $z+z'+g = g$ implies $\varphi(z)+\varphi(z')+\varphi(g)=\varphi(g)$. This implies $\varphi(z)+\varphi(z')+\varphi(g)+\varphi(g')+a+a'=\varphi(g)+\varphi(g')+a+a'=a+a'$, i.e., $\varphi(z)+\varphi(z')+a+a'=a+a'$, i.e, $\varphi(z)+\Big(\varphi(z)\Big)'+a = a$, which gives $\varphi(z) \, \, {\leq}_q \, \, a$ and hence $\{ x \in G \; | \; x \, \, {\leq}_q \, \, g  \} \subseteq \{ x \in G \; | \; \varphi(x) \, \, {\leq}_q \, \, a \}$. Therefore, $\{ x \in G \; | \; x \, \, {\leq}_q \, \, g  \} = \{ x \in G \; | \; \varphi(x) \, \, {\leq}_q \, \, a\}$.  Hence for all $x\in G$ 
\begin{align*}
\tau_{a,b}(\varphi(x)) = \begin{cases} 
0, &\text{if}\ \varphi(x) \, \, {\leq}_q \, a \\
b+b', &\text{otherwise}
\end{cases} 
\, \, \, \, = \begin{cases} 
0, &\text{if}\ x \, {\leq}_q \, g \\
b+b', &\text{otherwise}
\end{cases} \, \, \, \, = \tau_{g,b}(x).
\end{align*}
Hence $T_G$ is an ideal of $End_0(G)$.
\end{proof}

\begin{theorem}\label{th:finite}
Let $(G, +, 0)$ be a commutative inverse monoid and $S$ be a subsemiring of  $End_0(G)$ containing $\textbf{id}_G$. If $S$ is ideal-simple and $S\cap R_G \neq \{\theta\}$, then $G$ is finite.	
\end{theorem}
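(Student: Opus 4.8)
The plan is to read the hypothesis $S\cap R_G \neq \{\theta\}$ as exhibiting a nonzero ideal of $S$, let ideal-simplicity collapse that ideal onto the whole of $S$, and then use the presence of $\textbf{id}_G$ to force $G$ to be finite. The only external fact I would need is the one recorded just before the theorem, namely that $R_G$ is an ideal of $End_0(G)$; none of the earlier structural lemmas on $T_G$ are required.

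The first step is to observe that $S\cap R_G$ is a two-sided ideal of $S$. This is the routine fact that the intersection of an ideal of an ambient semiring with a subsemiring is an ideal of that subsemiring: for $s\in S$ and $\varphi\in S\cap R_G$, closure of $S$ under multiplication gives $s\cdot\varphi,\ \varphi\cdot s\in S$, while the ideal property of $R_G$ in $End_0(G)$ gives $s\cdot\varphi,\ \varphi\cdot s\in R_G$, and $S\cap R_G$ is plainly closed under addition. Since $\theta\in S$ is the zero, the zero ideal of $S$ is $\{\theta\}$. The second step is to invoke ideal-simplicity: the only ideals of $S$ are $\{\theta\}$ and $S$, and because the hypothesis says $S\cap R_G\neq\{\theta\}$, the ideal $S\cap R_G$ must equal $S$, i.e. $S\subseteq R_G$. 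The final step is immediate: $\textbf{id}_G\in S\subseteq R_G$, so the range of $\textbf{id}_G$ is a finite subset of $G$; but that range is all of $G$, whence $G$ is finite.

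The argument is short, so there is no serious computational obstacle. The only points requiring care are interpretive: one must use that $\{\theta\}$ is genuinely the zero ideal of $S$ (so that $S\cap R_G\neq\{\theta\}$ really asserts the ideal $S\cap R_G$ is nonzero) and that $R_G$ is two-sided (so that $S\cap R_G$ is a two-sided, rather than merely one-sided, ideal and ideal-simplicity applies). Both are already in place, $\theta$ being the ambient zero and $R_G$ having been noted to be an ideal of $End_0(G)$, so I expect the proof to be essentially this three-line deduction.
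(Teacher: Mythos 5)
Your proof is correct and is essentially the paper's argument: both hinge on the fact that $R_G$ is an ideal of $End_0(G)$ and use ideal-simplicity to force $\textbf{id}_G \in R_G$, whence $G = \textbf{id}_G(G)$ is finite. The only cosmetic difference is that the paper applies simplicity to the principal ideal $\langle\beta\rangle$ of a single nonzero $\beta \in S\cap R_G$, writing $\textbf{id}_G = \alpha_1\cdot\beta\cdot\gamma_1+\cdots+\alpha_n\cdot\beta\cdot\gamma_n \in R_G$, while you apply it directly to the ideal $S\cap R_G$ itself --- an equivalent, if anything slightly tidier, packaging of the same idea.
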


\begin{proof}
Since $S \cap R_G \neq \{\theta\}$, so we take a nonzero endomorphism $\beta \in S\cap R_G$ and consider the ideal $\langle\beta\rangle$ of $S$ generated by $\beta$. Since $S$ is ideal-simple and contains $\textbf{id}_G$, so there exist some endomorphisms $\alpha_1,\gamma_1,\alpha_2,\gamma_2,\cdots,\alpha_n,\gamma_n \in S$ such that $\textbf{id}_G = \alpha_1 \cdot\beta \cdot\gamma_1+\alpha_2 \cdot\beta \cdot\gamma_2+\cdots+\alpha_n \cdot\beta \cdot\gamma_n$. Since $\beta \in R_G$, so we get $\textbf{id}_G\in R_G$ and hence $G$ is finite.	
\end{proof}

\begin{cor}
Let $(G, +, 0)$ be a commutative inverse monoid and $S$ be a subsemiring of $End_0(G)$ containing $\textbf{id}_G$. If $S$ is ideal-simple and $S\cap T_G \neq \{\theta\}$, then $G$ is finite.
\end{cor}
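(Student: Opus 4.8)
The plan is to derive this corollary directly from Theorem \ref{th:finite}, exploiting the containment $T_G \subseteq R_G$ that was recorded in the preamble to this section. The two statements are identical in every hypothesis except that the theorem asks for $S \cap R_G \neq \{\theta\}$ while the corollary asks for $S \cap T_G \neq \{\theta\}$. So the entire content of the argument is to check that the corollary's hypothesis is the stronger of the two, after which the theorem applies verbatim.

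Concretely, I would first observe that since $T_G \subseteq R_G$, intersecting both sides with $S$ yields $S \cap T_G \subseteq S \cap R_G$. Hence if $S \cap T_G$ contains a nonzero endomorphism $\beta$, then $\beta \in S \cap R_G$ as well, and in particular $S \cap R_G \neq \{\theta\}$. The remaining hypotheses carry over unchanged: $S$ is a subsemiring of $End_0(G)$ containing $\textbf{id}_G$, and $S$ is ideal-simple. Thus all the assumptions of Theorem \ref{th:finite} are met, and its conclusion gives that $G$ is finite.

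There is essentially no obstacle here, since the work has already been done in Theorem \ref{th:finite}; the only thing to be sure of is that the inclusion $T_G \subseteq R_G$ is legitimately available. This is immediate from the definitions: each generator $\tau_{a,b} = \mu_{0,b,a}$ of $T_G$ takes only the two values $0$ and $b+b'$, so it has finite range and lies in $R_G$, and since $R_G$ is closed under the semiring operations (being a subsemiring of $End_0(G)$), the whole subsemiring $T_G$ generated by the $\tau_{a,b}$ sits inside $R_G$. With this in hand the reduction to Theorem \ref{th:finite} is complete.

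\begin{proof}
Since every generator $\tau_{a,b} = \mu_{_{0,b,a}}$ of $T_G$ has range contained in $\{0, b+b'\}$, it is an element of $R_G$; as $R_G$ is a subsemiring of $End_0(G)$, it follows that $T_G \subseteq R_G$. Intersecting with $S$ gives $S \cap T_G \subseteq S \cap R_G$. Hence the hypothesis $S \cap T_G \neq \{\theta\}$ forces $S \cap R_G \neq \{\theta\}$. All remaining hypotheses of Theorem \ref{th:finite} are satisfied, namely that $S$ is a subsemiring of $End_0(G)$ containing $\textbf{id}_G$ and that $S$ is ideal-simple. Therefore, by Theorem \ref{th:finite}, $G$ is finite.
\end{proof}
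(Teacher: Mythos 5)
Your proof is correct and matches the paper's own argument, which likewise reduces the corollary to Theorem \ref{th:finite} via the inclusion $T_G \subseteq R_G$ (stated in the paper's section preamble). Your additional verification that each generator $\tau_{a,b}$ has finite range, so that $T_G \subseteq R_G$, is a harmless elaboration of a fact the paper takes for granted.
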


\begin{proof}
Since $T_G \subseteq R_G$, so by Theorem \ref{th:finite}, it follows that $G$ is finite.
\end{proof}

\begin{theorem}
Let $(G, +, 0)$ be a commutative inverse monoid containing at least two idempotents and an absorbing element 
$\infty$. Let $E$ be a subsemiring of $End_0(G)$ such that $E$ is separated by idempotents and $\textbf{T}_G \subseteq E$. Then $E$ is congruence-simple.
\end{theorem}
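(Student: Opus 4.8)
The plan is to show that every congruence $\mathscr{S}$ on $E$ with $\mathscr{S} \neq \epsilon$ must be the universal congruence $E \times E$. The whole argument is organised around the infinite element $\tau_{0,\infty}$ supplied by Lemma \ref{le: leftcomposion}: since $\tau_{0,\infty}$ is additively absorbing and the zero endomorphism $\theta$ is the additive identity of $End_0(G)$, it suffices to prove that $(\theta, \tau_{0,\infty}) \in \mathscr{S}$. Indeed, once this single pair lies in $\mathscr{S}$, adding an arbitrary $f \in E$ to both components and using that $\mathscr{S}$ respects $+$ gives $(f, \tau_{0,\infty}) = (\theta + f, \tau_{0,\infty} + f) \in \mathscr{S}$ for every $f \in E$; transitivity then forces $(f,g) \in \mathscr{S}$ for all $f, g \in E$, that is, $\mathscr{S} = E \times E$.

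To locate a useful pair, I would start from nontriviality: pick $(\varphi, \psi) \in \mathscr{S}$ with $\varphi \neq \psi$. Because $E$ is separated by idempotents there is some $e \in E(G)$ with $\varphi(e) \neq \psi(e)$, and $\varphi(e), \psi(e)$ are themselves idempotents. By Remark \ref{rem:monolith} the quasi-order $\leq_q$ agrees with the semilattice order $\leq$ on $E(G)$; since $\varphi(e) \neq \psi(e)$, at least one of $\psi(e) \nleq_q \varphi(e)$ and $\varphi(e) \nleq_q \psi(e)$ must hold. After possibly interchanging $\varphi$ and $\psi$ (legitimate since $\mathscr{S}$ is symmetric) I may assume $\psi(e) \nleq_q \varphi(e)$, while trivially $\varphi(e) \leq_q \varphi(e)$.

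The key step is a ``sandwich'' that exploits this asymmetry. For arbitrary $a, d \in G$ consider the multipliers $\tau_{\varphi(e),d}$ and $\tau_{a,e}$, which lie in $\textbf{T}_G \subseteq E$. Since $\mathscr{S}$ is a congruence, $\big(\tau_{\varphi(e),d}\cdot\varphi\cdot\tau_{a,e},\ \tau_{\varphi(e),d}\cdot\psi\cdot\tau_{a,e}\big) \in \mathscr{S}$, and both composites are evaluated by the second formula of Lemma \ref{le: leftcomposion} with $b = e$ and $c = \varphi(e)$. On the $\varphi$-side we have $\varphi(e) \leq_q \varphi(e)$, so the composite collapses to $\theta$; on the $\psi$-side we have $\psi(e) \nleq_q \varphi(e)$, so the composite equals $\tau_{a,d}$. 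Hence $(\theta, \tau_{a,d}) \in \mathscr{S}$ for all $a, d \in G$, and specialising to $a = 0$, $d = \infty$ gives exactly $(\theta, \tau_{0,\infty}) \in \mathscr{S}$, completing the reduction of the first paragraph.

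The only genuinely creative step is the choice of the two outer factors in the sandwich; everything else is bookkeeping. The obstacle is to see that setting the ``test value'' $b$ equal to the separating idempotent $e$ and the cut-off $c$ equal to $\varphi(e)$ makes the $\varphi$-branch fall into the $\leq_q$ region (forcing $\theta$) while the $\psi$-branch is pushed into the complementary region (forcing $\tau_{a,d}$); this is precisely where the separation-by-idempotents hypothesis and the computation rule of Lemma \ref{le: leftcomposion} are used in tandem, and where one must remember that $\leq_q$ restricts to $\leq$ on idempotents. Once this is in place, the presence of the absorbing element $\infty$, i.e. of the infinite element $\tau_{0,\infty}$, makes the final collapse to $E \times E$ automatic, so that $E$ is congruence-simple.
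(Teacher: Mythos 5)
Your proposal is correct and follows essentially the same route as the paper: sandwich the congruent pair $(\varphi,\psi)$ between two $\tau$'s chosen via the separating idempotent $e$, apply the composition formula of Lemma \ref{le: leftcomposion} to get $(\theta,\tau_{a,d})\in\mathscr{S}$ for all $a,d$, and then use the infinite element $\tau_{0,\infty}$ to collapse $\mathscr{S}$ to $E\times E$. The only difference is cosmetic — you place the cut-off at $\varphi(e)$ assuming $\psi(e)\nleq_q\varphi(e)$, while the paper cuts at $\psi(e)$ assuming $\varphi(e)\nleq\psi(e)$ — and you additionally spell out the antisymmetry of $\leq_q$ on $E(G)$, which the paper leaves implicit.
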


\begin{proof}
Let $\Re$ be a semiring congruence on $E$ such that $\Re \neq {\Delta}_E$. Then there exists $\varphi , \psi \in E$ with $\varphi \neq \psi$, but $\varphi \, \, \Re \, \, \psi$. Since $E$ is separated by idempotents, so there exists $e \in E(G)$ such that $\varphi(e) \neq \psi(e)$. Here $E(G)$ is a semilattice and $\varphi(e), \psi(e) \in E(G)$. Without any loss of generality, we may consider $\varphi(e) \nleq \psi(e)$ [If $\varphi(e) \nleq \psi(e)$, then we consider $(\psi, \varphi)$ instead of the pair $(\varphi, \psi)$]. 
	
For all $a,b \in G$; we have $\tau_{_{a,e}}, \tau_{_{\psi(e),b}} \in E$. Using Lemma \ref{le: leftcomposion}, we get $\tau_{_{\psi(e),b}} \cdot \varphi \cdot \tau_{_{a,e}} = \tau_{_{a,\tau_{\psi(e),b}(\varphi(e))}} = \tau_{_{a,b}}$, because $\varphi(e) \nleq \psi(e)$. Also Using Lemma \ref{le: leftcomposion}, we get $\tau_{_{\psi(e),b}} \cdot \psi \cdot \tau_{_{a,e}} = \tau_{_{a,\tau_{\psi(e),b}(\psi(e))}} = \theta$. Since $\Re $ is a semiring congruence on $E$, so $(\tau_{_{\psi(e),b}} \cdot \varphi \cdot \tau_{_{a,e}}) \, \, \Re \, \, (\tau_{_{\psi(e),b}} \cdot \psi \cdot \tau_{_{a,e}})$ and hence $\tau_{_{a,b}} \, \, \Re \, \, \theta$. Taking $a=0$ and $b=\infty$, we have $\tau_{_{0,\infty}} \, \,\Re \, \, \theta$ and hence $\tau_{_{0,\infty}} = \alpha + \tau_{_{0,\infty}} \, \, \Re \, \, \alpha + \theta= \alpha$, for all $\alpha \in E$, since $\Re $ is a semiring congruence on $E$. Therefore, $\Re = E\times E $ and consequently, $E$ is congruence-simple.
\end{proof}

\begin{cor}
Let $(G, +, 0)$ be a commutative inverse monoid containing at least two idempotents and an absorbing element 
$\infty$. If $End_0(G)$ is separated by idempotents, then it is congruence-simple.
\end{cor}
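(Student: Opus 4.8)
The plan is to read this statement off as the special case $E = End_0(G)$ of the immediately preceding Theorem. That Theorem asserts that any subsemiring $E$ of $End_0(G)$ which is separated by idempotents and contains $\textbf{T}_G$ is congruence-simple, under the standing hypotheses that $(G,+,0)$ is a commutative inverse monoid with at least two idempotents and an absorbing element $\infty$. All of these standing hypotheses are assumed here verbatim, so the only task is to check that $End_0(G)$ itself qualifies as such an $E$.

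First I would take $E = End_0(G)$. This is trivially a subsemiring of itself, so the first requirement is met. The hypothesis of the Corollary is precisely that $End_0(G)$ is separated by idempotents, which supplies the second requirement. For the third, recall that $\textbf{T}_G$ was introduced as the subsemiring of $End_0(G)$ generated by the endomorphisms $\tau_{a,b}$ (with $a, b \in G$); hence $\textbf{T}_G \subseteq End_0(G) = E$ by construction, with nothing further to verify.

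With all three hypotheses of the Theorem confirmed for $E = End_0(G)$, the Theorem applies directly and yields that $End_0(G)$ is congruence-simple, which is exactly the assertion. There is no genuine obstacle here: the substantive work --- taking an arbitrary nontrivial congruence $\Re$, separating a witnessing pair $(\varphi,\psi)$ by an idempotent $e$, forcing $\tau_{a,b}\,\Re\,\theta$ through the composites $\tau_{_{\psi(e),b}}\cdot\varphi\cdot\tau_{_{a,e}}$ and $\tau_{_{\psi(e),b}}\cdot\psi\cdot\tau_{_{a,e}}$ via Lemma \ref{le: leftcomposion}, and then collapsing everything by absorbing against the infinite element $\tau_{_{0,\infty}}$ to obtain $\Re = E\times E$ --- has already been carried out in the proof of the Theorem. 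The one point worth flagging is that for $End_0(G)$ the property ``separated by idempotents'' must be imposed as a hypothesis rather than derived, which is precisely why it appears as an assumption in the Corollary and why the conclusion is not unconditional.
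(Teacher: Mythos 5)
Your proposal is correct and matches the paper's intent exactly: the corollary is stated without proof precisely because it is the instance $E = End_0(G)$ of the preceding theorem, and your verification of the three hypotheses (subsemiring, separation by idempotents as assumed, and $\textbf{T}_G \subseteq End_0(G)$ by construction) is all that is needed.
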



\begin{thebibliography}{99}
\bibitem{go} Golan, J. S., \emph{The Theory of Semirings with Applications in Mathematics and Theoretical Computer Science}, Pitman Monographs and Surveys in Pure and Applied Mathematics 54,
Longman Scientific (1992). 

\bibitem{howie} Howie, J. M., \emph{Fundamentals of semigroup theory}, Clarendon Press, Oxford (1995). 
		 
\bibitem{jez} Je\v{z}ek, J., Kepka, T., Mar\'oti, M., \emph{The endomorphism semiring of a semilattice}, Semigroup Forum \textbf{78} (2009), 21 - 26.
   
\bibitem{Karvellas} Karvellas, P. H., \emph{Inverse semirings}, J. Austral. Math. Soc. \textbf{18} (1974), 277 - 288.

\bibitem{katsov2014}  Katsov, Y.; Nam, T. G. ; Zumbr\"agel, J.; \emph{On simpleness of semirings and complete semirings}; Journal of Algebra and Its Applications (2014); Vol. \textbf{13} (6);  1450015 (29 pages).
DOI: 10.1142/S0219498814500157
   
\bibitem{Petrich} Petrich, M., \emph{Inverse Semigroups}, John Wiley and Sons, New York (1984).

\bibitem{zumbragel} Zumbr\"agel, J.; \emph{On simpleness of semirings and complete semirings}; Journal of Algebra and Its Applications (2008); Vol. \textbf{7} (3); 363 - 377.

\end{thebibliography}
\end{document}